\DeclareMathAlphabet{\mathcalligra}{T1}{calligra}{m}{n}
\DeclareFontShape{T1}{calligra}{m}{n}{<->s*[2.2]callig15}{}
\def\theequation{\@arabic\c@equation}
\newcommand{\mas}{\operatorname{Mas}}
\newcommand{\Mas}{\operatorname{Mas}}
\newcommand{\Mor}{\operatorname{Mor}}
\newcommand{\beq}{\begin{equation}}
\newcommand{\enq}{\end{equation}}
\newcommand{\bbR}{{\mathbb{R}}}
\newcommand{\bbC}{{\mathbb{C}}}
\newcommand{\bi}{\bibitem}
\numberwithin{equation}{section}
\renewcommand{\det}{\operatorname{det}}
\newcommand{\dom}{\operatorname{dom}}
\renewcommand{\ker}{\operatorname{ker}}
\theoremstyle{plain}
\newtheorem{theorem}{Theorem}[section]
\newtheorem{lemma}[theorem]{Lemma}
\newtheorem{proposition}[theorem]{Proposition}
\theoremstyle{definition}
\newtheorem{definition}[theorem]{Definition}
\newtheorem{remark}[theorem]{Remark}
\newtheorem{claim}[theorem]{Claim}
\title{The Maslov Index for Lagrangian pairs on $\mathbb{R}^{2n}$}
\author[P.\ Howard, Y.\ Latushkin, and A.\ Sukhtayev]{P.\ Howard, Y. Latushkin, and A.\ Sukhtayev}
\address{Mathematics Department,
Texas A\&M University, College Station, TX 77843, USA}
\address{Mathematics Department,
University of Missouri, Columbia, MO 65211, USA}
\address{Mathematics Department,
Indiana University, Bloomington, IN 47405, USA}
\email{phoward@math.tamu.edu}
\email{latushkiny@missouri.edu}
\email{alimsukh@iu.edu}
\date{\today}
\keywords{Eigenvalues; Maslov index; Morse index; Schr\"odinger operators}
\begin{document}

\begin{abstract} 
We discuss a definition of the Maslov index for Lagrangian pairs 
on $\mathbb{R}^{2n}$ based on spectral flow, 
and develop many of its salient properties. We provide two 
applications to illustrate how our approach leads to a 
straightforward analysis of the relationship between the Maslov
index and the Morse index for Sch\"odinger operators on 
$[0,1]$ and $\mathbb{R}$. 
\end{abstract}

\maketitle

\section{Introduction} \label{introduction}

With origins in the work of V. P. Maslov \cite{Maslov1965a} and subsequent 
development by V. I. Arnol'd \cite{arnold67}, the Maslov index on 
$\mathbb{R}^{2n}$ is a tool for determining the nature of intersections 
between two evolving Lagrangian subspaces (see Definition \ref{lagrangian_subspace}). 
As discussed in \cite{CLM}, several equivalent definitions
are available, and we focus on a definition for Lagrangian pairs based
on the development in \cite{BF98} (using the definition of spectral 
flow introduced in \cite{P96}). We note at the outset that the theory associated with the
Maslov index has now been extended well beyond the simple setting of our 
analysis (see, for example, \cite{BF98, F}); nonetheless, the Maslov 
index for Lagrangian pairs on $\mathbb{R}^{2n}$ is a useful tool, 
and a systematic development of its properties is certainly warranted.  

As a starting point, we define what we will mean by a {\it Lagrangian
subspace} of $\mathbb{R}^{2n}$.

\begin{definition} \label{lagrangian_subspace}
We say $\ell \subset \mathbb{R}^{2n}$ is a Lagrangian subspace
if $\ell$ has dimension $n$ and
\begin{equation*}
(Jx, y)_{\mathbb{R}^{2n}} = 0, 
\end{equation*} 
for all $x, y \in \ell$. Here, $(\cdot, \cdot)_{\mathbb{R}^{2n}}$ denotes
Euclidean inner product on $\mathbb{R}^{2n}$, and  
\begin{equation*}
J = 
\begin{pmatrix}
0 & -I_n \\
I_n & 0
\end{pmatrix},
\end{equation*}
with $I_n$ the $n \times n$ identity matrix. We sometimes adopt standard
notation for symplectic forms, $\omega (x,y) = (Jx, y)_{\mathbb{R}^{2n}}$.
Finally, we denote by $\Lambda (n)$ the collection of all Lagrangian 
subspaces of $\mathbb{R}^{2n}$, and we will refer to this as the 
{\it Lagrangian Grassmannian}.
\end{definition}

A simple example, important for intuition, is the case $n = 1$, for which 
$(Jx, y)_{\mathbb{R}^{2}} = 0$ if and only if $x$ and $y$ are linearly 
dependent. In this case, we see that any line through the origin is a 
Lagrangian subspace of $\mathbb{R}^2$. As a foreshadowing of further 
discussion, we note that each such Lagrangian subspace can be identified
with precisely two points on the unit circle $S^1$. 

More generally, any Lagrangian subspace of $\mathbb{R}^{2n}$ can be
spanned by a choice of $n$ linearly independent vectors in 
$\mathbb{R}^{2n}$. We will generally find it convenient to collect
these $n$ vectors as the columns of a $2n \times n$ matrix $\mathbf{X}$, 
which we will refer to as a {\it frame} for $\ell$. Moreover, we will 
often write $\mathbf{X} = {X \choose Y}$, where $X$ and $Y$ are 
$n \times n$ matrices.

Given any two Lagrangian subspaces $\ell_1$ and $\ell_2$, with associated 
frames $\mathbf{X}_1 = {X_1 \choose Y_1}$ and 
$\mathbf{X}_2 = {X_2 \choose Y_2}$, we can define the complex $n \times n$
matrix 
\begin{equation} \label{tildeW}
\tilde{W} = - (X_1 + i Y_1) (X_1 - i Y_1)^{-1} (X_2 - i Y_2) (X_2 + i Y_2)^{-1},
\end{equation}  
which we will see in Section \ref{derivation_section} is unitary. (We will 
also verify in Section \ref{derivation_section} that $(X_1 - iY_1)$ and 
$X_2 + iY_2$ are both invertible, and that $\tilde{W}$ is independent
of the choice of frames we take for $\ell_1$ and $\ell_2$.) Notice that 
if we switch the roles of $\ell_1$ and $\ell_2$ then $\tilde{W}$ will be replaced 
by $\tilde{W}^{-1}$, and since $\tilde{W}$ is unitary this is $\tilde{W}^*$. 
We conclude that the eigenvalues in the switched case will be complex conjugates of 
those in the original case. 

\begin{remark} \label{tildeW_remark}
We use the tilde to distinguish the $n \times n$ 
complex-valued matrix $\tilde{W}$ from the Souriau map 
(see equation (\ref{souriau}) below), which is a related 
$2n \times 2n$ matrix often---as here---denoted $W$. The general
form of $\tilde{W}$ appears in a less general context in 
\cite{DZ, HS}. For the special case $\mathbf{X}_2 = {0 \choose I}$
(associated, for example, with Dirichlet boundary conditions for 
a Sturm-Liouville eigenvalue problem) we see that 
\begin{equation} \label{Dirichlet_form}
\tilde{W} = (X_1 + iY_1) (X_1 - iY_1)^{-1},
\end{equation}
which has been extensively studied, perhaps most systematically in \cite{At}
(particularly Chapter 10). If we let $\tilde{W}_D$ denote (\ref{Dirichlet_form})
for $\mathbf{X}_1 = {0 \choose I}$ and for $j = 1, 2$ set  
\begin{equation*}
\tilde{W}_j = (X_j + iY_j) (X_j - iY_j)^{-1},
\end{equation*}
then our form for $\tilde{W}$ can be viewed as the composition map 
\begin{equation} \label{composition}
- \tilde{W}_1 \tilde{W}_D (\tilde{W}_2 \tilde{W}_D)^{-1} 
= - \tilde{W}_1 (\tilde{W}_2)^{-1}. 
\end{equation}
For a related observation regarding the Souirau map see Remark
\ref{souriau_remark}.
\end{remark}

Combining observations from Sections \ref{framework_section} and \ref{derivation_section},
we will establish the following theorem (cf. Lemma 1.3 in \cite{BF98}).

\begin{theorem} \label{intersection_theorem}
Suppose $\ell_1, \ell_2 \subset \mathbb{R}^{2n}$ are Lagrangian
subspaces, with respective frames $\mathbf{X}_1 = {X_1 \choose Y_1}$ and 
$\mathbf{X}_2 = {X_2 \choose Y_2}$, and let $\tilde{W}$ be as defined
in (\ref{tildeW}). Then 
\begin{equation*}
\dim \ker (\tilde{W} + I) = \dim (\ell_1 \cap \ell_2).
\end{equation*}   
That is, the dimension of the eigenspace of $\tilde{W}$ associated with 
the eigenvalue $-1$ is precisely the dimension of the intersection of 
the Lagrangian subspaces $\ell_1$ and $\ell_2$.
\end{theorem}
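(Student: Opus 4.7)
My plan is to construct an explicit $\mathbb{C}$-linear isomorphism between $\ker(\tilde{W} + I)$ and the complexified intersection $\ell_1^{\mathbb{C}} \cap \ell_2^{\mathbb{C}} \subset \mathbb{C}^{2n}$, where $\ell_j^{\mathbb{C}} := \mathbf{X}_j \mathbb{C}^n$ is the complex span of a frame of $\ell_j$. Since for any real subspaces $V_1, V_2 \subset \mathbb{R}^{m}$ one has $V_{1}^{\mathbb{C}} \cap V_{2}^{\mathbb{C}} = (V_1 \cap V_2)^{\mathbb{C}}$ (by uniqueness of the real/imaginary decomposition) together with $\dim_{\mathbb{C}} V^{\mathbb{C}} = \dim_{\mathbb{R}} V$, such an isomorphism immediately yields $\dim \ker(\tilde{W} + I) = \dim(\ell_1 \cap \ell_2)$.

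As an enabling preliminary I will record that for any frame $\mathbf{X} = \binom{X}{Y}$ of a Lagrangian subspace, both $X + iY$ and $X - iY$ are invertible: the Lagrangian condition $X^{T}Y = Y^{T}X$ cancels the imaginary cross terms in $(X + iY)^{*}(X + iY) = X^{T}X + Y^{T}Y$, so $(X + iY) a = 0$ forces $Xa = Ya = 0$, hence $\mathbf{X} a = 0$, and $a = 0$ follows from the linear independence of the columns of $\mathbf{X}$; complex conjugation handles $X - iY$. With this in hand I define $\Psi \colon \ell_{1}^{\mathbb{C}} \cap \ell_{2}^{\mathbb{C}} \to \mathbb{C}^{n}$ by $\Psi(\mathbf{X}_{1} a) := (X_{1} + iY_{1}) a$. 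When $\mathbf{X}_{1} a = \mathbf{X}_{2} b$ we have $X_{1} a = X_{2} b$ and $Y_{1} a = Y_{2} b$, so $(X_{1} + iY_{1}) a = (X_{2} + iY_{2}) b$ and also $(X_{1} - iY_{1}) a = (X_{2} - iY_{2}) b$; thus $\Psi$ is well-defined, frame-symmetric, $\mathbb{C}$-linear, and injective by the preliminary. A short telescoping computation using the second companion identity then gives $\tilde{W} \Psi(v) = -\Psi(v)$, so the image of $\Psi$ lies in $\ker(\tilde{W} + I)$.

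The step I expect to require the most care is surjectivity. Given $u \in \ker(\tilde{W} + I)$ I plan to set $a := (X_{1} + iY_{1})^{-1} u$ and $b := (X_{2} + iY_{2})^{-1} u$, so that by construction $u = (X_{1} + iY_{1}) a = (X_{2} + iY_{2}) b$. The key point is to unwind $\tilde{W} u = -u$ to produce the companion identity $(X_{1} - iY_{1}) a = (X_{2} - iY_{2}) b$; this is precisely what the particular form of $\tilde{W}$ is engineered to deliver, but it must be extracted carefully with the four factors in the correct order, essentially by rewriting $\tilde{W} u = -u$ as $(X_{1} + iY_{1})(X_{1} - iY_{1})^{-1}(X_{2} - iY_{2}) b = (X_{1} + iY_{1}) a$ and canceling the invertible factor $(X_{1} + iY_{1})$ from the left. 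Once that identity is secured, adding and subtracting yields $X_{1} a = X_{2} b$ and $Y_{1} a = Y_{2} b$, i.e., $\mathbf{X}_{1} a = \mathbf{X}_{2} b \in \ell_{1}^{\mathbb{C}} \cap \ell_{2}^{\mathbb{C}}$, and $\Psi$ returns this element to $u$. Combined with the preceding paragraph, this shows $\Psi$ is a $\mathbb{C}$-linear isomorphism and completes the proof.
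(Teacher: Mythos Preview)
Your argument is correct, and it is genuinely different from the paper's proof. The paper does not work directly with $\tilde{W}$; instead it routes through the $2n \times 2n$ real matrix $W \in \mathfrak{U}_J$ acting on the complex Hilbert space $\mathbb{R}^{2n}_J$. First, Lemma~\ref{W_lemma} establishes $\ker(W+I) = (\ell_1 \cap \ell_2) \oplus J(\ell_1 \cap \ell_2)$ using the conjugation operator $\tau_0$ and the defining relation $\ell_1 = U(J\ell_2)$. Then the proof of the theorem shows that each eigenvector of $\tilde{W}$ at $-1$ corresponds to exactly two linearly independent eigenvectors of $W$ at $-1$, so $\dim\ker(\tilde{W}+I) = \tfrac{1}{2}\dim\ker(W+I) = \dim(\ell_1 \cap \ell_2)$.

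Your route bypasses all of this machinery: you complexify the real intersection to $\ell_1^{\mathbb{C}} \cap \ell_2^{\mathbb{C}}$ and build an explicit $\mathbb{C}$-linear isomorphism $\Psi$ onto $\ker(\tilde{W}+I)$ using nothing more than the invertibility of $X_j \pm iY_j$ and the algebraic structure of $\tilde{W}$. This is shorter and entirely self-contained. The paper's approach, on the other hand, is embedded in a framework that pays dividends elsewhere: the same $W$ reappears as the Souriau map $-(2\mathcal{P}_1 - I)(2\mathcal{P}_2 - I)$, the $\mathbb{R}^{2n}_J$ setup connects to Furutani's formulation, and the $2$-to-$1$ eigenvector correspondence is later generalized to all eigenvalues $e^{i\theta}$, not just $-1$. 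So your proof is the more economical way to establish this particular theorem, while the paper's proof is a byproduct of structure it needs anyway.
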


Given a parameter interval $I = [a,b]$, which can be normalized to 
$[0,1]$, we consider maps $\ell:I \to \Lambda (n)$, which will be 
expressed as $\ell (t)$. In order to specify a notion of continuity, 
we need to define a metric on $\Lambda (n)$, and following 
\cite{F} (p. 274), we do this in terms of orthogonal projections 
onto elements $\ell \in \Lambda (n)$. Precisely, let $\mathcal{P}_i$ 
denote the orthogonal projection matrix onto $\ell_i \in \Lambda (n)$
for $i = 1,2$. I.e., if $\mathbf{X}_i$ denotes a frame for $\ell_i$, 
then $\mathcal{P}_i = \mathbf{X}_i (\mathbf{X}_i^t \mathbf{X}_i)^{-1} \mathbf{X}_i^t$.
We take our metric $d$ on $\Lambda (n)$ to be defined 
by 
\begin{equation*}
d (\ell_1, \ell_2) := \|\mathcal{P}_1 - \mathcal{P}_2 \|,
\end{equation*} 
where $\| \cdot \|$ can denote any matrix norm. We will say 
that $\ell: I \to \Lambda (n)$ is continuous provided it is 
continuous under the metric $d$. Likewise, for 
$\mathcal{L} = (\ell_1, \ell_2) \in \Lambda (n) \times \Lambda (n)$
and $\mathcal{M} = (m_1, m_2) \in \Lambda (n) \times \Lambda (n)$,
we take  
\begin{equation} \label{rho_metric}
\rho(\mathcal{L}, \mathcal{M}) = (d(\ell_1, m_1)^2 + d(\ell_2, m_2)^2)^{1/2}.
\end{equation}

Given two continuous maps $\ell_1 (t), \ell_2 (t)$ on a parameter
interval $I$, we denote by $\mathcal{L}(t)$ the path 
\begin{equation*}
\mathcal{L} (t) = (\ell_1 (t), \ell_2 (t)).
\end{equation*} 
In what follows, we will define the Maslov index for the path 
$\mathcal{L} (t)$, which will be a count, including both multiplicity
and direction, of the number of times the Lagrangian paths
$\ell_1$ and $\ell_2$ intersect. In order to be clear about 
what we mean by multiplicty and direction, we observe that 
associated with any path $\mathcal{L} (t)$ we will have 
a path of unitary complex matrices as described in (\ref{tildeW}).
We have already noted that the Lagrangian subspaces $\ell_1$
and $\ell_2$ intersect at a value $t_0 \in I$ if and only 
if $\tilde{W} (t_0)$ has -1 as an eigenvalue. In the event of 
such an intersection, we define the multiplicity of the 
intersection to be the multiplicity of -1 as an eigenvalue 
(since $\tilde{W}$ is unitary the algebraic and geometric
multiplicites are the same). When we talk about the direction 
of an intersection, we mean the direction the eigenvalues of 
$\tilde{W}$ are moving (as $t$ varies) along the unit circle 
$S^1$ as they pass through $-1$ 
(we take counterclockwise as the positive direction). We note
that the eigenvalues certainly do not all need to be moving in 
the same direction, and that we will need to take care with 
what we mean by a crossing in the following sense: we must decide
whether to increment the Maslov index upon arrival or 
upon departure.

Following \cite{BF98, F, P96}, we proceed by choosing a 
partition $a = t_0 < t_1 < \dots < t_n=b$ of $I = [a,b]$, along 
with numbers $\epsilon_j \in (0,\pi)$ so that 
$\ker\big(\tilde{W} (t) - e^{i (\pi \pm \epsilon_j)} I\big)=\{0\}$ for 
$t_{j-1} < t < t_j$; 
that is, $e^{i(\pi \pm \epsilon_j)} \in \bbC \setminus \sigma(\tilde{W} (t))$, 
for $t_{j-1} < t < t_j$ and $j=1,\dots,n$. 
Moreover, for each $j=1,\dots,n$ and any $t \in [t_{j-1},t_j]$ there are only 
finitely many values $\theta \in [0,\epsilon_j]$ 
for which $e^{i(\pi+\theta)} \in \sigma(\tilde{W} (t))$.

Fix some $j \in \{1, 2, \dots, n\}$ and consider the value
\begin{equation} \label{kdefined}
k (t,\epsilon_j) := 
\sum_{0 \leq \theta < \epsilon_j}
\dim \ker \big(\tilde{W} (t) - e^{i(\pi+\theta)}I \big).
\end{equation} 
for $t_{j-1} \leq t \leq t_j$. This is precisely the sum, along with multiplicity,
of the number of eigenvalues of $\tilde{W} (t)$ that lie on the arc 
\begin{equation*}
A_j := \{e^{i t}: t \in [\pi, \pi+\epsilon_j)\}.
\end{equation*}
The stipulation that 
$e^{i(\pi\pm\epsilon_j)} \in \bbC\setminus \sigma(\tilde{W} (t))$, for 
$t_{j-1} < t < t_j$
asserts that no eigenvalue can enter $A_j$ in the clockwise direction 
or exit in the counterclockwise direction during the interval $t_{j-1} < t < t_j$.
In this way, we see that $k(t_j, \epsilon_j) - k (t_{j-1}, \epsilon_j)$ is a count of the number of 
eigenvalues that entered $A_j$ in the counterclockwise direction minus the number that left
in the clockwise direction during the interval $(t_{j-1}, t_j)$.

In dealing with the catenation of paths, it's particularly important to 
understand this quantity if an eigenvalue resides at $-1$ at either $t = t_{j-1}$
or $t = t_j$ (i.e., if an eigenvalues begins or ends at a crosssing). If an eigenvalue 
moving in the counterclockwise direction 
arrives at $-1$ at $t = t_j$, then we increment the difference foward, while if 
the eigenvalue arrives at -1 from the clockwise direction we do not. On
the other hand, suppose an eigenvalue resides at -1 at $t = t_{j-1}$ and moves
in the counterclockwise direction. There is no change, and so we do not increment
the difference, but we decrement the difference if the eigenvalue leaves in 
the clockwise direction. In summary, the difference increments forward upon arrivals 
in the counterclockwise direction, but not upon arrivals in the clockwise direction,
and it decrements upon departure in the clockwise direction, but not upon 
departure in the counterclockwise direction.      

We are now ready to define the Maslov index.

\begin{definition}\label{dfnDef3.6}  
Let $\mathcal{L} (t) = (\ell_1 (t), \ell_2 (t))$, where $\ell_1, \ell_2:I \to \Lambda (n)$ 
are continuous paths in the Lagrangian--Grassmannian. 
The Maslov index $\mas(\mathcal{L};I)$ is defined by
\begin{equation}
\mas(\mathcal{L};I)=\sum_{j=1}^n(k(t_j,\epsilon_j)-k(t_{j-1},\epsilon_j)).
\end{equation}
\end{definition}

\begin{remark} \label{cf} 
In \cite{CLM} the authors provide a list of six properties that entirely 
characterize the Maslov index for a pair of Lagrangian paths. Our definition
satisfies their properties, except for the choice of normalization (their
Property VI), which is reversed. In our notation, their normalization is 
specified for $n=1$ with reference to Lagrangian subspaces $\ell_1$ and 
$\ell_2$ with respective frames $\mathbf{X}_1 = {1 \choose 0}$ 
and $\mathbf{X}_2 = {\cos t \choose \sin t}$. For this choice, 
we have 
\begin{equation*}
\tilde{W} (t) = - \frac{\cos t - i\sin t}{\cos t + i \sin t},
\end{equation*}
for which we see immediately that $\tilde{W} (-\frac{\pi}{4}) = -i$,
$\tilde{W} (0) = -1$, and $\tilde{W} (\frac{\pi}{4}) = i$. This path
is monotonic, so the following three values are immediate:
$\Mas (\ell_1, \ell_2; [-\frac{\pi}{4}, \frac{\pi}{4}]) = -1$,
$\Mas (\ell_1, \ell_2; [-\frac{\pi}{4}, 0]) = 0$, and 
$\Mas (\ell_1, \ell_2; [0, \frac{\pi}{4}]) = -1$. (Cf. equation 
(1.7) in \cite{CLM}).

We also note two additional definitions of the Maslov index 
for paths. In Section 3 of \cite{rs93} the authors give a definition
based on crossing forms, and in Section 3.5 of \cite{F} the author 
gives a definition based on a direct sum of the Lagrangian pairs. 
In Section \ref{derivation_section} (of the current paper) we 
clarify how these two definitions are related to our Definition 
\ref{dfnDef3.6}. 
\end{remark}

One of the most important features of the Maslov index is homotopy invariance, 
for which we need to consider continuously varying families of Lagrangian 
paths. To set some notation, we denote by $\mathcal{P} (I)$ the collection 
of all paths $\mathcal{L} (t) = (\ell_1 (t), \ell_2 (t))$, where 
$\ell_1, \ell_2:I \to \Lambda (n)$ are continuous paths in the 
Lagrangian--Grassmannian. We say that two paths 
$\mathcal{L}, \mathcal{M} \in \mathcal{P} (I)$ are homotopic provided 
there exists a family $\mathcal{H}_s$ so that 
$\mathcal{H}_0 = \mathcal{L}$, $\mathcal{H}_1 = \mathcal{M}$, 
and $\mathcal{H}_s (t)$ is continuous as a map from $[a,b] \times [0,1]$
into $\Lambda (n)$. 
 
The Maslov index has the following properties (see, for example, 
Theorem 3.6 in \cite{F}). 

\medskip
\noindent
{\bf (P1)} (Path Additivity) If $a < b < c$ then 
\begin{equation*}
\mas (\mathcal{L};[a, c]) = \mas (\mathcal{L};[a, b]) + \mas (\mathcal{L}; [b, c]).
\end{equation*}

\medskip
\noindent
{\bf (P2)} (Homotopy Invariance) If $\mathcal{L}, \mathcal{M} \in \mathcal{P} (I)$ 
are homotopic, with $\mathcal{L} (a) = \mathcal{M} (a)$ and  
$\mathcal{L} (b) = \mathcal{M} (b)$ (i.e., if $\mathcal{L}, \mathcal{M}$
are homotopic with fixed endpoints) then 
\begin{equation*}
\mas (\mathcal{L};[a, b]) = \mas (\mathcal{M};[a, b]).
\end{equation*}

\begin{remark} 
For (P1), the only issue regards cases in which there is an intersection at 
$t = b$. For example, suppose the intersection is an arrival in the clockwise 
direction, followed by departure in the same direction. Then at this 
intersection, $\mas (\mathcal{L}; [a,c])$ decrements by 1, 
$\mas (\mathcal{L}; [a,b])$ is unaffected, and $\mas (\mathcal{L}; [b,c])$
decrements by 1. Other cases are similar.

Verification of (P2) requires more work, and we leave that discussion to 
an appendix.  
\end{remark}

\section{Framework for $W$ and $\tilde{W}$} \label{framework_section}

In Section \ref{derivation_section}, we will use the formulation of \cite{BF98, F}
to derive our form of $\tilde{W}$, and in preparation for that we will 
briefly discuss the nature of this formulation. This material has all 
been covered in a much more general case in \cite{BF98, F}, and our
motivation for including this section is simply to allow readers to 
understand this framework in the current setting.  

We record at the outset an important property of Lagrangian frames.

\begin{proposition} \label{Lagrangian_property} 
A $2n \times n$ matrix $\mathbf{X} = {X \choose Y}$
is a frame for a Lagrangian subspace if and only if the columns of 
$\mathbf{X}$ are linearly independent, and additionally 
\begin{equation} \label{LP} 
X^t Y - Y^t X = 0.
\end{equation}
We refer to this relation as the Lagrangian property for frames.
\end{proposition}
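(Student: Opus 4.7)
The plan is to translate the symplectic orthogonality condition in Definition \ref{lagrangian_subspace} into a matrix identity on the frame $\mathbf{X}$, and then read off the equivalence with (\ref{LP}). The key observation is that the symplectic form $\omega(x,y) = (Jx,y)_{\mathbb{R}^{2n}} = y^t J x$ is bilinear, so if $\ell = \operatorname{span}\{\text{columns of }\mathbf{X}\}$ then $\omega$ vanishes on all of $\ell \times \ell$ if and only if it vanishes on pairs of columns of $\mathbf{X}$, which is the single matrix condition $\mathbf{X}^t J \mathbf{X} = 0$.

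For the forward direction, I would start by assuming $\mathbf{X}$ is a frame for a Lagrangian subspace $\ell$. By the definition of ``frame'' the columns of $\mathbf{X}$ are a basis for $\ell$, hence linearly independent; and by Definition \ref{lagrangian_subspace} we have $(J\mathbf{X}e_j, \mathbf{X}e_k)_{\mathbb{R}^{2n}} = 0$ for all standard basis vectors $e_j, e_k$, which packages into $\mathbf{X}^t J \mathbf{X} = 0$. Using the block form
\begin{equation*}
J \mathbf{X} = \begin{pmatrix} 0 & -I_n \\ I_n & 0 \end{pmatrix} \begin{pmatrix} X \\ Y \end{pmatrix} = \begin{pmatrix} -Y \\ X \end{pmatrix},
\end{equation*}
a direct block multiplication gives $\mathbf{X}^t J \mathbf{X} = -X^t Y + Y^t X$, so $\mathbf{X}^t J \mathbf{X} = 0$ is literally (\ref{LP}).

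For the converse, I would assume the columns of $\mathbf{X}$ are linearly independent and that $X^t Y - Y^t X = 0$. Letting $\ell$ be their span, linear independence guarantees $\dim \ell = n$. Given arbitrary $x, y \in \ell$, write $x = \mathbf{X} a$ and $y = \mathbf{X} b$ for some $a, b \in \mathbb{R}^n$; then
\begin{equation*}
(Jx, y)_{\mathbb{R}^{2n}} = (\mathbf{X} b)^t J (\mathbf{X} a) = b^t (\mathbf{X}^t J \mathbf{X}) a = -b^t (X^t Y - Y^t X) a = 0,
\end{equation*}
so $\ell$ satisfies the Lagrangian property, completing the equivalence.

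There is no real obstacle here; the proposition is essentially a bookkeeping lemma. The only subtlety worth flagging explicitly is that the Lagrangian condition must be verified on all of $\ell \times \ell$, not just on the chosen generators, but bilinearity of $\omega$ makes this automatic once $\mathbf{X}^t J \mathbf{X} = 0$ is known. The linear independence hypothesis is separately needed only to ensure that $\operatorname{span}(\mathbf{X})$ genuinely has dimension $n$, as required by Definition \ref{lagrangian_subspace}.
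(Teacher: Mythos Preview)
Your proof is correct and takes essentially the same approach as the paper: both reduce the Lagrangian condition to vanishing of the symplectic form on column pairs and identify this with the matrix identity $X^tY - Y^tX = 0$. The only cosmetic difference is that you package the computation as the block identity $\mathbf{X}^t J \mathbf{X} = -X^tY + Y^tX$, whereas the paper writes out the $(i,j)$ entry $(x_i,y_j)_{\mathbb{R}^n} - (x_j,y_i)_{\mathbb{R}^n}$ explicitly; the paper itself notes your formulation in the remark immediately following the proposition.
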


\begin{proof}
To see this, we observe by definition that $\mathbf{X}$ is the 
frame of a Lagrangian subspace if and only if its columns are 
linearly independent, and each of its column pairs 
${x_i \choose y_i}$, ${x_j \choose y_j}$ satisfies  
\begin{equation*}
(J {x_i \choose y_i}, {x_j \choose y_j})_{\mathbb{R}^{2n}} = 0; \quad 
\text{i.e., } ({-y_i \choose x_i}, {x_j \choose y_j})_{\mathbb{R}^{2n}}
= (x_i, y_j)_{\mathbb{R}^{2n}} - (x_j, y_i)_{\mathbb{R}^{2n}} = 0.
\end{equation*}
Observing that 
\begin{equation*}
(X^t Y - Y^t X)_{i j} = 
(x_i, y_j)_{\mathbb{R}^{n}} - (x_j, y_i)_{\mathbb{R}^{n}},
\end{equation*}
we obtain the claim.
\end{proof}

\begin{remark} It is clear that the Lagrangian property can alternatively
be expressed as 
\begin{equation*}
\mathbf{X}^t J \mathbf{X} = 0.
\end{equation*}
\end{remark}

We next observe that for a given pair of Lagrangian subspaces 
$\mathcal{L} = (\ell_1, \ell_2) \in \Lambda (n) \times \Lambda (n)$
we can change our choice of frames without changing either 
the associated $\tilde{W}$ or the projection matrices 
$\mathcal{P}_1$ and $\mathcal{P}_2$. 

\begin{proposition} \label{changing_frames}
Suppose $\mathbf{X}_1 = {X_1 \choose Y_1}$ and 
$\mathbf{X}_2 = {X_2 \choose Y_2}$ are any two frames for 
the same Lagrangian subspace $\ell \subset \mathbb{R}^{2n}$. 
Then 
\begin{equation*}
(X_1 + i Y_1) (X_1 - i Y_1)^{-1} 
= (X_2 + i Y_2) (X_2 - i Y_2)^{-1},
\end{equation*}
and likewise 
\begin{equation*}
\mathbf{X}_1 (\mathbf{X}_1^t \mathbf{X}_1)^{-1} \mathbf{X}_1^t
= \mathbf{X}_2 (\mathbf{X}_2^t \mathbf{X}_2)^{-1} \mathbf{X}_2^t.
\end{equation*}
\end{proposition}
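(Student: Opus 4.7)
The plan is to observe that any two frames for the same $n$-dimensional subspace of $\mathbb{R}^{2n}$ must be related by right multiplication by an invertible $n\times n$ matrix, and then to check that both expressions in the statement are invariant under such a change.

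First I would note that since $\mathbf{X}_1$ and $\mathbf{X}_2$ each have $n$ linearly independent columns spanning the same $n$-dimensional space $\ell$, there exists a unique invertible $n \times n$ matrix $C$ with $\mathbf{X}_2 = \mathbf{X}_1 C$. Reading this blockwise gives $X_2 = X_1 C$ and $Y_2 = Y_1 C$, so that $X_2 \pm i Y_2 = (X_1 \pm i Y_1) C$. (At this point I would invoke the fact, to be established in Section \ref{derivation_section}, that $X_j - i Y_j$ is invertible for any Lagrangian frame, so all inverses below make sense.)

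For the first identity, the $C$ factors cancel directly:
\begin{equation*}
(X_2 + i Y_2)(X_2 - i Y_2)^{-1} = (X_1 + i Y_1) C \, C^{-1} (X_1 - i Y_1)^{-1} = (X_1 + i Y_1)(X_1 - i Y_1)^{-1}.
\end{equation*}
For the projection identity, I would compute $\mathbf{X}_2^t \mathbf{X}_2 = C^t \mathbf{X}_1^t \mathbf{X}_1 C$ and hence $(\mathbf{X}_2^t \mathbf{X}_2)^{-1} = C^{-1} (\mathbf{X}_1^t \mathbf{X}_1)^{-1} (C^t)^{-1}$, so that
\begin{equation*}
\mathbf{X}_2 (\mathbf{X}_2^t \mathbf{X}_2)^{-1} \mathbf{X}_2^t
= \mathbf{X}_1 C \, C^{-1} (\mathbf{X}_1^t \mathbf{X}_1)^{-1} (C^t)^{-1} \, C^t \mathbf{X}_1^t
= \mathbf{X}_1 (\mathbf{X}_1^t \mathbf{X}_1)^{-1} \mathbf{X}_1^t.
\end{equation*}

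There is essentially no obstacle here: the argument is pure linear algebra once the change-of-frame matrix $C$ is identified, and the only external input is the invertibility of $X_1 - i Y_1$, which is handled independently. The Lagrangian property itself is not even needed for this proposition, since the conclusion holds for arbitrary frames of a common $n$-dimensional subspace; it enters only indirectly through the fact that $X - iY$ is invertible for Lagrangian frames.
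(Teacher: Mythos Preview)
Your proof is correct and follows essentially the same approach as the paper: both arguments identify the change-of-frame matrix relating $\mathbf{X}_1$ and $\mathbf{X}_2$ and then verify by direct computation that the two expressions are invariant under right multiplication by an invertible matrix. Your additional remark that the Lagrangian property is not actually used in this proposition (beyond guaranteeing invertibility of $X - iY$) is a worthwhile observation.
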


\begin{proof} Under our assumptions, there exists an invertible 
$n \times n$ matrix $M$ so that $\mathbf{X}_1 = \mathbf{X}_2 M$. 
In particular, we must have $X_1 = X_2 M$ and $Y_1 = Y_2 M$. But then
\begin{equation*}
\begin{aligned}
(X_1 + i Y_1) (X_1 - i Y_1)^{-1} 
&= (X_2 M + i Y_2 M) (X_2 M - i Y_2 M)^{-1} \\
&= (X_2 + i Y_2) M M^{-1} (X_2 - i Y_2)^{-1} 
= (X_2 + i Y_2) (X_2 - i Y_2)^{-1}. 
\end{aligned}
\end{equation*}
Likewise, 
\begin{equation*}
\begin{aligned}
\mathbf{X}_1 (\mathbf{X}_1^t \mathbf{X}_1)^{-1} \mathbf{X}_1^t 
&= \mathbf{X}_2 M ((\mathbf{X}_2 M)^t \mathbf{X}_2 M)^{-1} (\mathbf{X}_2 M)^t \\
&= \mathbf{X}_2 M (M^t (\mathbf{X}_2^t \mathbf{X}_2) M)^{-1} M^t \mathbf{X}_2^t 
= \mathbf{X}_2 M M^{-1} (\mathbf{X}_2^t \mathbf{X}_2)^{-1} (M^t)^{-1} M^t \mathbf{X}_2^t \\
&= \mathbf{X}_2 (\mathbf{X}_2^t \mathbf{X}_2)^{-1} \mathbf{X}_2^t. 
\end{aligned}
\end{equation*}
\end{proof}

Next, we introduce a complex Hilbert space, 
which we will denote $\mathbb{R}_J^{2n}$. The elements of this 
space will continue to be real-valued vectors of length $2n$, 
but we will define multiplication by complex scalars 
$\gamma = \alpha + i \beta$ as 
\[
(\alpha + i\beta) u := \alpha u + \beta Ju, \quad u \in \mathbb{R}^{2n}, 
\alpha + i\beta \in \bbC,
\]
and we will define a complex scalar product 
\[
(u,v)_{\mathbb{R}^{2n}_J}
:= 
(u,v)_{\mathbb{R}^{2n}}-i\omega(u,v),\quad u,v\in\mathbb{R}^{2n}
\]
(recalling $\omega (u,v) = (Ju, v)_{\mathbb{R}^{2n}}$).
It is important to note that, considered as a real vector space, $\mathbb{R}^{2n}_J$ 
is identical to $\mathbb{R}^{2n}$, and not its complexification 
$\mathbb{R}^{2n} \otimes_{\bbR} \bbC$. (In fact,  $\mathbb{R}^{2n}_J \cong \bbC^n$ 
while $\mathbb{R}^{2n} \otimes_{\bbR} \bbC \cong \bbC^{2n}$.) 
However, it is easy to see that $\mathbb{R}^{2n}_J \cong \ell \otimes_{\bbR} \bbC$ 
for any  Lagrangian subspace $\ell \in \Lambda(n)$, and we'll take advantage of 
this correspondence.

For a matrix $U$ acting on $\mathbb{R}^{2n}_J$, we denote the adjoint in 
$\mathbb{R}^{2n}_J$ by $U^{J*}$ so 
that 
\begin{equation*}
(U u, v )_{\mathbb{R}^{2n}_J} =
 (u, U^{J*} v )_{\mathbb{R}^{2n}_J},
\end{equation*}
for all $u,v \in \mathbb{R}^{2n}_J$. We denote by $\mathfrak{U}_J$ the space of 
unitary matrices acting on $\mathbb{R}^{2n}_J$ (i.e., the matrices so that 
$U U^{J*} = U^{J*} U = I$). In order to clarify the nature of $\mathfrak{U}_J$, 
we note that we have the identity 
\begin{equation*}
(U u, U v )_{\mathbb{R}^{2n}_J}
= (u, v)_{\mathbb{R}^{2n}_J},
\end{equation*} 
from which 
\begin{equation*}
(U u, U v)_{\mathbb{R}^{2n}}
- i (J U u, U v)_{\mathbb{R}^{2n}}
=
(u, v)_{\mathbb{R}^{2n}}
- i (J u, v)_{\mathbb{R}^{2n}}.
\end{equation*}
Equating real parts, we see that $U$ must be unitary as a matrix on $\mathbb{R}^{2n}$, 
while by equating imaginary parts we see that $UJ = JU$. We have, then, 
\begin{equation*}
\mathfrak{U}_J=\{U\in\mathbb{R}^{2n \times 2n}\,|\,U^tU=UU^t=I_{2n},\, UJ=JU\}.
\end{equation*}

Fix some Lagrangian subspace $\ell_0 \subset \mathbb{R}^{2n}$, and notice that 
$J (\ell_0)$ is orthogonal to $\ell_0$; i.e., if $\mathbf{X}_0 = {X_0 \choose Y_0}$
is a frame for $\ell_0$ then $J \mathbf{X}_0 = {-Y_0 \choose X_0}$ is a frame 
for $J (\ell_0)$, and we have 
\begin{equation*}
\begin{pmatrix} X_0^t & Y_0^t \end{pmatrix}
\begin{pmatrix} -Y_0 \\ X_0 \end{pmatrix}
= -X_0^t Y_0 + Y_0^t X_0 = 0,
\end{equation*}
by the Lagrangian property. In this way, we see that 
\begin{equation*}
\mathbb{R}^{2n} = \ell_0 \oplus J(\ell_0),
\end{equation*}
so that given any $z \in \mathbb{R}^{2n}$ we can express $z$
uniquely as $z = x + Jy$ for some $x,y \in \ell_0$. We define 
the conjuguate of $z$ in $R^{2n}_J$ by 
\begin{equation*}
\tau_0 z := x - Jy.
\end{equation*} 
Notice that we can compute $\tau_{0} = 2 \Pi_{0} - I_{2n}$, 
where $\Pi_{0}$ is the orthogonal projection onto $\ell_0$.
For any $U \in \mathfrak{U}_J$, we define 
\begin{equation} \label{capitalT}
U^T := \tau_0 U^t \tau_0,
\end{equation} 
which is also in $\mathfrak{U}_J$ (as follows easily 
from our next proposition).

\begin{proposition} \label{tau_properties}
Let $\mathbf{X}_0 = {X_0 \choose Y_0}$ be a frame for a Lagrangian 
subspace $\ell_0 \subset \mathbb{R}^{2n}$. Then the matrix
$X_0^t X_0 + Y_0^t Y_0$ is symmetric and positive definite, and 
if we set $M_0 := (X_0^t X_0 + Y_0^t Y_0)^{-1/2}$ we have
\begin{equation*}
\begin{aligned}
\Pi_0 &= 
\begin{pmatrix} 
X_0 M_0^2 X_0^t & X_0 M_0^2 Y_0^t \\
Y_0 M_0^2 X_0^t & Y_0 M_0^2 Y_0^t
\end{pmatrix} \\
\tau_0 &= 
\begin{pmatrix} 
2 X_0 M_0^2 X_0^t - I & 2 X_0 M_0^2 Y_0^t \\
2 Y_0 M_0^2 X_0^t & 2 Y_0 M_0^2 Y_0^t - I
\end{pmatrix},
\end{aligned}
\end{equation*}
with additionally $\tau_0^t = \tau_0$, $\tau_0^2 = I$,
and $J \tau_0 = - \tau_0 J$. 
\end{proposition}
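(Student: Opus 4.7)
The plan is to establish the four claims in the order listed, separating the linear-algebra computations from the single identity that has any real content. Everything else reduces to standard facts about orthogonal projections combined with the splitting $\mathbb{R}^{2n} = \ell_0 \oplus J(\ell_0)$ recorded in the text just before the proposition.

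First I would address the positive definiteness of $X_0^t X_0 + Y_0^t Y_0$. Symmetry is immediate since both $X_0^t X_0$ and $Y_0^t Y_0$ are symmetric. For definiteness, for any $v\in\mathbb{R}^n$ one has
\[
v^t(X_0^t X_0 + Y_0^t Y_0)v = \|X_0 v\|^2 + \|Y_0 v\|^2 = \|\mathbf{X}_0 v\|^2,
\]
which vanishes if and only if $\mathbf{X}_0 v = 0$, forcing $v = 0$ since the columns of $\mathbf{X}_0$ are linearly independent. This legitimizes $M_0 = (X_0^t X_0 + Y_0^t Y_0)^{-1/2}$. Next, since $\ell_0$ is the column span of $\mathbf{X}_0$, the standard formula for the orthogonal projection reads $\Pi_0 = \mathbf{X}_0(\mathbf{X}_0^t \mathbf{X}_0)^{-1} \mathbf{X}_0^t$. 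The identity $\mathbf{X}_0^t \mathbf{X}_0 = X_0^t X_0 + Y_0^t Y_0 = M_0^{-2}$ turns this into $\Pi_0 = \mathbf{X}_0 M_0^2 \mathbf{X}_0^t$, whose $2\times 2$ block expansion is exactly the stated form. The formula for $\tau_0$ then follows from the identification $\tau_0 = 2\Pi_0 - I_{2n}$ already derived in the text.

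For the three algebraic properties, the first two are routine: $\Pi_0$ is an orthogonal projection, so $\Pi_0^t = \Pi_0$ and $\Pi_0^2 = \Pi_0$, whence $\tau_0^t = 2\Pi_0 - I = \tau_0$ and $\tau_0^2 = 4\Pi_0^2 - 4\Pi_0 + I = I$. The anticommutation $J\tau_0 = -\tau_0 J$ is where the Lagrangian structure enters. Rather than attempting a direct block-matrix computation, I would exploit the geometric definition: $\tau_0$ acts as $+I$ on $\ell_0$ and as $-I$ on $J(\ell_0)$, while $J$ interchanges these two summands (using $J^2 = -I_{2n}$). Checking the two cases separately, for $x \in \ell_0$ one has $J\tau_0 x = Jx$ and $-\tau_0 J x = -(-Jx) = Jx$; for $Jy \in J(\ell_0)$ with $y \in \ell_0$ one has $J\tau_0(Jy) = J(-Jy) = y$ and $-\tau_0 J(Jy) = -\tau_0(-y) = y$. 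Since the two cases span $\mathbb{R}^{2n}$, the identity follows.

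The only genuine obstacle would arise if one insisted on verifying $J\tau_0 = -\tau_0 J$ directly from the block form of $\tau_0$: this would require invoking the Lagrangian relation $X_0^t Y_0 = Y_0^t X_0$ at precisely the right moment to cancel cross-terms between the diagonal and off-diagonal blocks. The geometric argument sidesteps this bookkeeping entirely, which is why I would organize the proof around the splitting $\mathbb{R}^{2n} = \ell_0 \oplus J(\ell_0)$ rather than around the explicit matrix formulas.
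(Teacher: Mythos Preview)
Your proof is correct. The paper's own proof is a terse sketch: it records the two block identities
\[
X_0 M_0^2 X_0^t + Y_0 M_0^2 Y_0^t = I, \qquad X_0 M_0^2 Y_0^t - Y_0 M_0^2 X_0^t = 0
\]
(citing Lemma 3.3 of \cite{HS}), derives the block form of $\Pi_0$ exactly as you do, and then declares that ``the remaining claims follow in a straightforward manner,'' evidently by direct block-matrix computation using those identities. Your treatment of $\Pi_0$, $\tau_0^t=\tau_0$, and $\tau_0^2=I$ matches the paper's, but for the anticommutation $J\tau_0=-\tau_0 J$ you take a genuinely different route: instead of multiplying out blocks and invoking the second identity above (which encodes the Lagrangian property), you argue geometrically from the eigenspace description of $\tau_0$ on the splitting $\ell_0\oplus J(\ell_0)$. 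This is cleaner and more conceptual---it makes transparent that anticommutation is exactly the statement that $J$ swaps the $\pm 1$ eigenspaces of $\tau_0$---at the small cost of not exhibiting the block identities that the paper later reuses (e.g., in deriving the Souriau map and in Section~3). Both approaches are short; yours is arguably the more illuminating one for this particular claim.
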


\begin{proof} 
These claims can all be proven in a straightforward manner, using the following 
identities, which are established in the proof of Lemma 3.3 in \cite{HS}:
\begin{equation} \label{lemma3.3}
\begin{aligned}
X_0 M_0^2 X_0^t + Y_0 M_0^2 Y_0^t &= I; \\
X_0 M_0^2 Y_0^t - Y_0 M_0^2 X_0^t &= 0.
\end{aligned}
\end{equation}
Noting that 
\begin{equation*}
\mathbf{X}_0^t \mathbf{X}_0 = 
\begin{pmatrix}
X_0^t & Y_0^t 
\end{pmatrix}
\begin{pmatrix}
X_0 \\ Y_0 
\end{pmatrix}
= X_0^t X_0 + Y_0^t Y_0,
\end{equation*}
we see that 
\begin{equation*}
\begin{aligned}
\Pi_0 &= \mathbf{X}_0 (\mathbf{X}_0^t \mathbf{X}_0)^{-1} \mathbf{X}_0^t
= \begin{pmatrix}
X_0 \\ Y_0 
\end{pmatrix}
M_0^2
\begin{pmatrix}
X_0^t \\ Y_0^t 
\end{pmatrix} \\
&= 
\begin{pmatrix}
X_0 M_0^2 Y_0^t & X_0 M_0^2 Y_0^t \\
Y_0 M_0^2 X_0^t & Y_0 M_0^2 Y_0^t
\end{pmatrix}.
\end{aligned}
\end{equation*}
The remaining claims follow in a straightfoward manner.
\end{proof}

Now, given a second Lagrangian subspace $\ell$, 
let $U \in \mathfrak{U}_J$ 
satisfy 
\begin{equation} \label{U_relation}
\ell = U (J (\ell_0)),
\end{equation}
or equivalently
\begin{equation} \label{U_equivalent}
U^t (\ell) = J(\ell_0).
\end{equation} 
(Such a matrix $U$ is not uniquely defined.) We define 
\begin{equation*}
W = U U^T = U \tau_0 U^t \tau_0,
\end{equation*} 
and it follows from Proposition \ref{tau_properties} 
that $W \in \mathfrak{U}_J$.

\begin{lemma} \label{W_lemma}
For $\ell_0$, $\ell$, and $W$ as above  
\begin{equation*}
\ker (W+I) = (\ell \cap \ell_0) \oplus J(\ell \cap \ell_0).
\end{equation*} 
\end{lemma}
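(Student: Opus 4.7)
The plan is to exploit the reflection structure of $\tau_0$ together with the intertwining $UJ = JU$ to directly check both inclusions. The key facts I will use repeatedly are: $\mathbb{R}^{2n} = \ell_0 \oplus J(\ell_0)$ with $\tau_0$ acting as $+I$ on $\ell_0$ and $-I$ on $J(\ell_0)$ (by Proposition \ref{tau_properties}); the defining relation $\ell = U(J(\ell_0))$, equivalently $U^t(\ell) = J(\ell_0)$; and the fact that $U \in \mathfrak{U}_J$ implies $U^t U = I$ and $U^t J = J U^t$ (the latter because $UJ = JU$). Together with $J^2 = -I$ these are enough to make every relevant identity a one-line computation.

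For the inclusion $(\ell \cap \ell_0) \oplus J(\ell \cap \ell_0) \subseteq \ker(W+I)$, I will first verify directness: any element in the intersection lies in $\ell_0 \cap J(\ell_0) = \{0\}$. Then I treat the two summands separately. If $v \in \ell \cap \ell_0$, then $\tau_0 v = v$, and $U^t v \in U^t(\ell) = J(\ell_0)$, so $\tau_0 U^t v = -U^t v$, whence $Wv = U\tau_0 U^t v = -v$. If $v = Jw$ with $w \in \ell \cap \ell_0$, then $\tau_0 v = -v$ (since $v \in J(\ell_0)$); moreover $U^t v = JU^t w$ and $U^t w \in J(\ell_0)$, say $U^t w = Jw'$ with $w' \in \ell_0$, so $U^t v = -w' \in \ell_0$ and $\tau_0 U^t v = -w'$. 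Then $Wv = U\tau_0 U^t\tau_0 v = -U\tau_0 U^t v = Uw' = -Jw = -v$, using $w = UJw' = JUw'$ to identify $Uw'$.

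For the reverse inclusion, suppose $Wv = -v$. Orthogonality of $U$ converts this to $\tau_0 U^t \tau_0 v = -U^t v$. Writing $v = x + Jy$ with $x, y \in \ell_0$ (so $\tau_0 v = x - Jy$) and setting $a := U^t x$, $b := U^t y$, the identity becomes $\tau_0(a - Jb) = -a - Jb$, where I have used $U^t J = J U^t$. Decompose further $a = a_1 + Ja_2$ and $b = b_1 + Jb_2$ with $a_i, b_i \in \ell_0$, and apply $J^2 = -I$ to rewrite both sides in the form (component in $\ell_0$) $+ J$(component in $\ell_0$). Matching the two components forces $a_1 = 0$ and $b_1 = 0$; i.e., $U^t x, U^t y \in J(\ell_0)$. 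Since $U^t(\ell) = J(\ell_0)$, this is equivalent to $x, y \in \ell$, so both $x$ and $y$ lie in $\ell \cap \ell_0$ and hence $v = x + Jy \in (\ell \cap \ell_0) \oplus J(\ell \cap \ell_0)$.

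Neither inclusion is conceptually hard; the only place where a slip is easy is in the reverse inclusion, where one must be careful to track the $\ell_0$ versus $J(\ell_0)$ components after applying $\tau_0$ and $U^t$. The main bookkeeping obstacle is to set up the decomposition of $U^t x$ and $U^t y$ along $\ell_0 \oplus J(\ell_0)$ cleanly so that the equation $\tau_0(a - Jb) = -a - Jb$ splits into two scalar-free identities that pin down the components. Once the intertwining $U^t J = J U^t$ and the reflection action of $\tau_0$ are written down, both directions reduce to short algebraic manipulations.
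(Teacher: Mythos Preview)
Your argument is correct and follows essentially the same route as the paper's proof: both directions rely on writing $z = x + Jy$ with $x,y \in \ell_0$, using $\tau_0$ as the reflection across $\ell_0$, and reducing via $U^t(\ell) = J(\ell_0)$ to the conclusion that $U^t x, U^t y \in J(\ell_0)$. The only differences are cosmetic---you split the forward inclusion into two cases (the paper handles $x + Jy$ in one pass) and you spell out the component-matching in the reverse inclusion that the paper dismisses as ``straightforward.''
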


\begin{proof} As a start, take any $z \in (\ell \cap \ell_0) \oplus J(\ell \cap \ell_0)$,
and write $z = x + Jy$ for some $x, y \in \ell \cap \ell_0$. We compute
\begin{equation*}
\begin{aligned}
Wz &= U \tau_0 U^t \tau_0 (x+Jy) \\
&= U \tau_0 U^t (x-Jy) \\
&= U \tau_0 (U^t x - JU^ty) \\
&\overset{*}{=} U (- U^t x - J U^t y) = - x - Jy = - z,
\end{aligned}
\end{equation*}
where in obtaining the equality indicated with * we have observed from 
(\ref{U_relation}) and (\ref{U_equivalent}) that $U^t x \in J(\ell_0)$
and $J U^t y \in \ell_0$.

On the other hand, suppose $z \in \mathbb{R}^{2n}$ satisfies 
$Wz = -z$. We can write $z = x+Jy$ for some $x, y \in \ell_0$,
and we would like to show that $x, y \in \ell$ so that in fact 
$x,y \in \ell \cap \ell_0$. We compute 
\begin{equation*}
\begin{aligned}
- (x+Jy) &= U \tau_0 U^t \tau_0 (x+Jy) 
= U \tau_0 U^t (x - Jy) \\
&= U \tau_0 (U^t x - U^t Jy),
\end{aligned}
\end{equation*}
which implies 
\begin{equation*}
- (U^t x + U^t Jy) = \tau_0 (U^t x - U^t Jy). 
\end{equation*}
It's straightforward to see that this can only hold 
if $U^t Jy \in \ell_0$ and $U^t x \in J (\ell_0)$,
which according to (\ref{U_relation}) and (\ref{U_equivalent})
implies that $x, y \in \ell$.
\end{proof}

For a similar statement in a more general context, see equation 
(2.37) in \cite{F}.

The relationship between $\ell_0$, $\ell$, and $U \in \mathfrak{U}_J$ 
provides a natural and productive connection between 
the elements $\ell$ of the Lagrangian Grassmannian and elements 
$U \in \mathfrak{U}_J$. However, the associated unitary matrices 
are not uniquely specified, and consequently the spectrum of $U$
contains redundant information. For example, in the simple case 
of $\mathbb{R}^2$ this redundant information corresponds with our
previous observation that each element $\ell \in \Lambda (1)$
corresponds with two points on $S^1$. We overcome this difficulty
by defining a new (uniquely specified) unitary matrix 
$W$ in $\mathbb{R}^{2n}_J$ by $W=U U^T$. 

We observe that the unitary condition $UJ = JU$ implies $U$
must have the form 
\begin{equation*}
U = 
\begin{pmatrix}
U_{11} & - U_{21} \\
U_{21} & U_{11}
\end{pmatrix}
=
\begin{pmatrix}
U_{11} & 0 \\
0 & U_{11}
\end{pmatrix}
+ J
\begin{pmatrix}
U_{21} & 0 \\
0 & U_{21}
\end{pmatrix}.
\end{equation*}
In addition, we have the scaling condition 
\begin{equation} \label{unitary_scaling}
\begin{aligned}
U_{11}^t U_{11} + U_{21}^t U_{21} &= I \\
U_{11} U_{11}^t + U_{21} U_{21}^t &= I \\
U_{11}^t U_{21} - U_{21}^t U_{11} &= 0 \\
U_{11} U_{21}^t - U_{21} U_{11}^t &= 0  
\end{aligned}
\end{equation}
(from $U U^t = U^t U = I$).
In this way, there is a natural one-to-one correspondence between 
matrices $U \in \mathfrak{U}_J$ and the $n \times n$ complex
unitary matrices $\tilde{U} = U_{11} + i U_{21}$ (i.e., the 
$\tilde{U} \in \mathbb{C}^{n \times n}$ so that 
$\tilde{U}^* \tilde{U} = \tilde{U} \tilde{U}^* = I$). It follows
that the matrix $W = U U^T$, which can be expressed as 
\begin{equation*}
W = 
\begin{pmatrix}
W_{11} & - W_{21} \\
W_{21} & W_{11}
\end{pmatrix},
\end{equation*}
has a natural corresponding matrix $\tilde{W} = W_{11} + i W_{21}$. 
We will see in section \ref{derivation_section} that our matrix 
$\tilde{W}$ in (\ref{tildeW}) is constructed
in precisely this way. 

\medskip
\noindent
{\bf Proof of Theorem \ref{intersection_theorem}.} 
Let $W$ and $\tilde{W}$ be as in the preceding paragraph, and 
suppose $z = x + Jy$, $x,y \in \ell_0$, is an eigenvector for 
$W$, associated to the eigenvalue $\lambda = -1$. 
If we write $x = {x_1 \choose x_2}$ and $y = {y_1 \choose y_2}$
then the equation $Wz = - z$ becomes 
\begin{equation*}
\begin{aligned}
W_{11} (x_1 - y_2) - W_{21} (x_2 + y_1) &= - (x_1 - y_2) \\
W_{21} (x_1 - y_2) + W_{11} (x_2 + y_1) &= - (x_2 + y_1).
\end{aligned}
\end{equation*}
We see that if $w = u + iv$, with $u = x_1 - y_2$ and $v = x_2 + y_1$,
then $\tilde{W} w = - w$. Moreover, $w$ cannot be trivial, because if 
$w = 0$ then $x_1 = y_2$ and $x_2 = -y_1$, so that 
\begin{equation*}
0 = \omega (x,y) = (Jx, y) = |x_1|^2 + |x_2|^2,
\end{equation*}
which would imply $x = 0$, and consequently $y=0$. This contradicts 
our assumption that $z$ is an eigenvector of $W$.

On the other hand, notice that if $w = u+iv$ is any eigenvector of 
$\tilde{W}$ associated to the eigenvalue $\lambda = -1$, then 
\begin{equation*}
\begin{aligned}
W_{11} u - W_{21} v &= - u \\
W_{11} v + W_{21} u &= - v.
\end{aligned}
\end{equation*} 
If we set $x = {x_1 \choose x_2} = {u \choose v}$ then $Wx = - x$, 
and likewise if we set $y = {y_1 \choose y_2} = {v \choose -u}$
then $WJy = - Jy$. We see that each eigenvector of $\tilde{W}$
associated to $\lambda = -1$ corresponds with precisely two
eigenvectors of $W$ associated to $\lambda = -1$. Since 
$\dim \ker (W+I) = 2 \dim (\ell_0 \cap \ell)$ (from Lemma 
\ref{W_lemma}), the theorem 
follows immediately.  
\hfill $\square$

\section{Derivation of $W$ and $\tilde{W}$} \label{derivation_section}

In this section, we will use our general formulation from Section 
\ref{framework_section} to derive the form of $\tilde{W}$ expressed 
in (\ref{tildeW}). We begin by collecting some straightforward observations
that will be used throughout our derivation.

\begin{lemma} If $\mathbf{X} = {X \choose Y}$ is a frame for a Lagrangian
subspace $\ell \subset \mathbb{R}^{2n}$ then $X^t X + Y^t Y$ is a symmetric
positive definite matrix, and the matrices $X-iY$ and $X+iY$ are both invertible.
\end{lemma}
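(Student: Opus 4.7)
The plan is to handle the symmetric positive-definite claim first and then use it to pin down invertibility of $X \pm iY$ in one stroke.

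For the first assertion, symmetry of $X^tX + Y^tY$ is immediate since $(X^tX)^t = X^tX$ and $(Y^tY)^t = Y^tY$. For positive definiteness, I would take any $v \in \mathbb{R}^n$ and compute
\begin{equation*}
v^t (X^t X + Y^t Y) v = \|Xv\|^2 + \|Yv\|^2 = \left\| \mathbf{X} v \right\|^2,
\end{equation*}
which is nonnegative and vanishes only when $\mathbf{X} v = 0$. Since $\mathbf{X}$ is a frame, its columns are linearly independent, so this forces $v = 0$; hence $X^t X + Y^t Y$ is positive definite.

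For the invertibility of $X \pm iY$, viewed as complex $n \times n$ matrices, the key observation is to compute $(X \pm iY)^*(X \pm iY)$ and invoke the Lagrangian property $X^t Y - Y^t X = 0$ from Proposition \ref{Lagrangian_property}. Explicitly,
\begin{equation*}
(X + iY)^*(X + iY) = (X^t - iY^t)(X + iY) = (X^t X + Y^t Y) + i(X^t Y - Y^t X) = X^t X + Y^t Y,
\end{equation*}
and an entirely symmetric computation gives $(X - iY)^*(X - iY) = X^t X + Y^t Y$ as well. Since the right-hand side is positive definite, it is invertible, which forces $X + iY$ and $X - iY$ to have trivial kernels; being square matrices, they are therefore invertible.

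There is no real obstacle: the whole argument rests on combining linear independence of the frame (for positive definiteness) with the Lagrangian identity (to convert the $\pm iY$ cross terms). If anything subtle arises, it would be in being explicit about the switch from real to complex inner products when asserting $(X+iY)^* = X^t - iY^t$, but this is just the definition of the conjugate transpose applied to a real-entry matrix promoted to $\mathbb{C}^{n\times n}$.
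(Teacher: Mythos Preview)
Your proof is correct. The first part (positive definiteness) is essentially identical to the paper's argument, which likewise observes that $\mathbf{X}^t\mathbf{X} = X^tX + Y^tY$ and invokes linear independence of the frame columns.

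For the invertibility of $X \pm iY$, your route differs slightly from the paper's. The paper argues by contradiction: it supposes $(X+iY)(u+iv) = 0$ for some nonzero $u+iv$, splits into the real system $Xu - Yv = 0$, $Yu + Xv = 0$, and then multiplies by $X^t$ and $Y^t$ and combines (using the Lagrangian property) to reach $(X^tX + Y^tY)v = 0$ and similarly for $u$. Your approach packages the same ingredients more compactly by computing $(X+iY)^*(X+iY) = X^tX + Y^tY$ in one line. Both arguments rest on exactly the same two facts---the Lagrangian identity and positive definiteness of $X^tX + Y^tY$---so the difference is purely in presentation; yours is a bit slicker, and indeed the paper uses essentially your identity later on (see the computation leading to $M_1^2$ in Section~\ref{derivation_section}).
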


\begin{proof} First, if $\mathbf{X}$ is the frame for a Lagrangian
subspace $\ell \subset \mathbb{R}^{2n}$ then the columns of 
$\mathbf{X}$ must be linearly independent. Positive definiteness 
(and hence invertibility) of $\mathbf{X}^t \mathbf{X} = X^t X + Y^t Y$ 
follows (see, e.g., p. 28 in \cite{Keener}; also, note that it's 
clear that this matrix is symmetric). 

Turning to invertibility of $X \pm iY$, we focus on $X+iY$, noting that 
if this matrix has zero as an eigenvalue then there will be a vector
$w = u+iv$ so that $(X+iY)(u+iv) = 0$, which means 
\begin{equation} \label{UVsystem}
\begin{aligned}
Xu - Yv &= 0 \\
Yu + Xv &= 0.
\end{aligned}
\end{equation} 
If we multiply the first of these equations by $Y^t$ and 
the second by $X^t$ and subtract the results (recalling the Lagrangian
property of frames (\ref{LP})) we obtain 
$(X^t X + Y^t Y) v = 0$. But we've already seen that 
$(X^t X + Y^t Y)$ is invertible, so we must have $v = 0$.
Likewise, if we multiply the first equation in (\ref{UVsystem})
by $X^t$ and the second by $Y^t$ we find that $u = 0$, which 
contradicts our assumption that $w = u+iv$ is an eigenvector
associated with zero. 
\end{proof}

To begin our construction of $\tilde{W}$, we let $\ell_1$ 
and $\ell_2$ denote two Lagrangian
subspaces of $\mathbb{R}^{2n}$, with associated frames 
$\mathbf{X}_1 = {X_1 \choose Y_1}$ and 
$\mathbf{X}_2 = {X_2 \choose Y_2}$. As discussed in Section \ref{framework_section}, 
we proceed by associating this pair of Lagrangian subspaces
with a matrix $U \in \mathfrak{U}_J$. In 
particular, $U$ should map $\ell_2^{\perp} = J(\ell_2)$ to $\ell_1$.
In terms of frames, this asserts that 
\begin{equation*}
\mathbf{X}_1  = U J \mathbf{X}_2, 
\end{equation*}
where in order to ensure the unitary normalization 
$U_{11}^t U_{11} + U_{21}^t U_{21} = I$, we note
that for each $i=1,2$ we can choose the frame $\mathbf{X}_i$
to be $X_i M_i \choose Y_i M_i$ for any $n \times n$ 
invertible matrix $M_i$. With this choice, we find 
that $U$ should solve 
\begin{equation} \label{Udefined}
\begin{pmatrix} X_1 M_1 \\ Y_1 M_1 \end{pmatrix}
=
\begin{pmatrix}
U_{11} & - U_{21} \\
U_{21} & U_{11}
\end{pmatrix}
\begin{pmatrix} - Y_2 M_2 \\ X_2 M_2 \end{pmatrix}. 
\end{equation}
We will verify below that the choices 
\begin{equation*}
M_i = (X_i^t X_i + Y_i^t Y_i)^{-1/2}
\end{equation*}
suffice.
We can express (\ref{Udefined}) as 
\begin{equation}
\begin{pmatrix} (X_1 M_1)^t \\ (Y_1 M_1)^t \end{pmatrix}
=
V
\begin{pmatrix} U_{11}^t \\ U_{21}^t \end{pmatrix}; 
\quad V = \begin{pmatrix}
- (Y_2 M_2)^t & - (X_2 M_2)^t \\
(X_2 M_2)^t & - (Y_2 M_2)^t
\end{pmatrix}. 
\end{equation}
Using identities of the form (\ref{lemma3.3}), we can 
check that $V$ is orthogonal, allowing us to solve
for $U$ and see that 
\begin{equation*}
U = 
\begin{pmatrix}
X_1 M_1 & - Y_1 M_1 \\ 
Y_1 M_1 & X_1 M_1
\end{pmatrix}
\begin{pmatrix}
-M_2 Y_2^t & M_2 X_2^t \\ 
- M_2 X_2^t & - M_2 Y_2^t
\end{pmatrix} =: U_1 U_2.
\end{equation*} 

We now compute 
\begin{equation*}
W = U U^T = U \tau_2 U^t \tau_2
= U_1 U_2 \tau_2 U_2^t U_1^t \tau_2,
\end{equation*}
where $\tau_2$ denotes the conjugation operator
obtained as in Section \ref{framework_section}, 
with $\ell_0$ replaced by $\ell_2$.
As in Proposition \ref{tau_properties}, we have 
\begin{equation*}
\tau_2 = 
\begin{pmatrix} 
2 X_2 M_2^2 X_2^t - I & 2 X_2 M_2^2 Y_2^t \\
2 Y_2 M_2^2 X_2^t & 2 Y_2 M_2^2 Y_2^t - I
\end{pmatrix},
\end{equation*}
and computing directly we can show that
\begin{equation*}
U_2 \tau_2 U_2^t = 
\begin{pmatrix}
-I & 0 \\
0 & I
\end{pmatrix}.
\end{equation*}
Using this intermediate step, and computing directly again 
we arrive at 
\begin{equation*}
\begin{aligned}
U_1 U_2 \tau_2 U_2^t U_1^t \tau_2
& = 
\begin{pmatrix}
X_1 M_1^2 X_1^t - Y_1 M_1^2 Y_1^t & - 2 X_1 M_1^2 Y_1^t \\
2 X_1 M_1^2 Y_1^t &  X_1 M_1^2 X_1^t - Y_1 M_1^2 Y_1^t 
\end{pmatrix} \\
& \quad \quad \times \begin{pmatrix}
Y_2 M_2^2 Y_2^t - X_2 M_2^2 X_2^t & - 2 X_2 M_2^2 Y_2  \\
2 X_2 M_2^2 Y_2^t &  Y_2 M_2^2 Y_2^t - X_2 M_2^2 X_2^t 
\end{pmatrix} =: W_1 W_2.
\end{aligned}
\end{equation*}

Last, we identify the matrix $\tilde{W}$, which we can compute
as $\tilde{W} = \tilde{W}_1 \tilde{W}_2$. First, it's clear
that 
\begin{equation} \label{W1alt}
\begin{aligned}
\tilde{W}_1 &= X_1 M_1^2 X_1^t - Y_1 M_1^2 Y_1^t
+ i 2 X_1 M_1^2 Y_1^t \\
&= (X_1 + i Y_1) M_1^2 (X_1^t + i Y_1^t),
\end{aligned}
\end{equation} 
where we've used the identity $X_1 M_1^2 Y_1^t = Y_1 M_1^2 X_1^t$
(see the proof of Proposition \ref{tau_properties}).  
Using the Lagrangian property (\ref{LP}),
we see that 
\begin{equation} \label{useful1}
\begin{aligned}
(X_1 - i Y_1)^{-1} (X_1^t + i Y_1^t)^{-1} 
&= \Big((X_1^t + i Y_1^t) (X_1 - i Y_1) \Big)^{-1} \\
&= \Big(X_1^t X_1 + Y_1^t Y_1 + i (Y_1^t X_1 - X_1^t Y_1) \Big)^{-1}
= M_1^2.
\end{aligned}
\end{equation}
Continuing with our calculation of $\tilde{W}_1$, we conclude 
\begin{equation*}
\begin{aligned}
\tilde{W}_1 &= (X_1 + i Y_1) (X_1 - i Y_1)^{-1} (X_1^t + i Y_1^t)^{-1} (X_1^t + i Y_1^t) \\
&= (X_1 + i Y_1) (X_1 - i Y_1)^{-1}.
\end{aligned}
\end{equation*}
Proceeding similarly, we find 
\begin{equation*}
\tilde{W}_2 = - (X_2 - i Y_2) (X_2 + i Y_2)^{-1},
\end{equation*}
from which the form of $\tilde{W}$ in (\ref{tildeW})
is immediate.

Using the argument leading to (\ref{useful1}), we obtain the 
identities 
\begin{equation} \label{useful2}
\begin{aligned}
(X_j - i Y_j)^{-1} &= M_j^2 (X_j^t + i Y_j^t) \\
(X_j + i Y_j)^{-1} &= M_j^2 (X_j^t - i Y_j^t),
\end{aligned}
\end{equation}
for $j = 1, 2$. This provides us with the alternative form 
\begin{equation*}
\tilde{W} = - (X_1 + i Y_1) M_1^2 (X_1^t + i Y_1^t) 
(X_2 - iY_2) M_2^2 (X_2^t - i Y_2^t). 
\end{equation*}

Using (\ref{W1alt}) (and the fact that $M_1^2$ is self-adjoint), we compute 
\begin{equation*}
\begin{aligned}
\tilde{W}_1 \tilde{W}_1^* &= (X_1 + i Y_1) M_1^2 (X_1^t + i Y_1^t)
(X_1 - i Y_1) M_1^2 (X_1^t - i Y_1^t) \\
&=
(X_1 + iY_1) M_1^2 (X_1^t - i Y_1^t) = I,
\end{aligned}
\end{equation*}
verifying that $\tilde{W}_1$ is unitary. Likewise, $\tilde{W}_2$ is 
unitary, and so $\tilde{W}$ is unitary. 

\begin{remark} \label{det_squared}
We can now extend Arnol'd's $\text{Det}^2$ map to the current setting 
(see, for example, Section 1.3 in \cite{arnold67}). We define a map 
$\text{Det}^2: \Lambda (n) \times \Lambda (n) \to S^1$ as follows: 
given any Lagrangian pair
$\ell_1, \ell_2 \in \Lambda (n)$ and respectively any frames 
$\mathbf{X}_1 = {X_1 \choose Y_1}$, 
$\mathbf{X}_2 = {X_2 \choose Y_2}$, we set
\begin{equation}
\begin{aligned}
\text{Det}^2 (\ell_1, \ell_2) &:= \det \tilde{W}
= - \det \Big{\{} \Big( (X_1 + i Y_1) M_1^2 (X_1^t + i Y_1^t)  \Big)
\cdot \Big((X_2 - iY_2) M_2^2 (X_2^t - i Y_2^t)\Big) \Big{\}} \\
&= - \frac{\det^2 (X_1 + i Y_1)}{\det (X_1^t X_1 + Y_1^t Y_1)}
\cdot
\frac{\det^2 (X_2 - i Y_2)}{\det (X_2^t X_2 + Y_2^t Y_2)}.
\end{aligned}
\end{equation}
We have already seen that $\tilde{W}$ does not depend on the 
choice of frames, and so the map $\text{Det}^2$ is well-defined.
\end{remark}

For some calculations, it's productive to observe that we can 
express our matrix $W$ in the coordinate-free form
\begin{equation} \label{souriau}
W = - (2\mathcal{P}_1 - I) (2 \mathcal{P}_2 - I),
\end{equation}
sometimes referred to as the {\it Souriau map}. Here, 
$\mathcal{P}_1$ and $\mathcal{P}_2$ are respectively
orthogonal projections onto $\ell_1$ and $\ell_2$,
and given particular frames $\mathbf{X}_i = {X_i \choose Y_i}$ we can express these as 
\begin{equation*}
\mathcal{P}_i = \mathbf{X}_i (\mathbf{X}_i^t \mathbf{X}_i)^{-1} \mathbf{X}_i^t 
= 
\begin{pmatrix} 
X_i \\ Y_i
\end{pmatrix}
M_i^2 
\begin{pmatrix} 
X_i^t & Y_i^t
\end{pmatrix}
= 
\begin{pmatrix} 
X_i M_i^2 X_i^t & X_i M_i^2 Y_i^t \\
Y_i M_i^2 X_i^t & Y_i M_i^2 Y_i^t
\end{pmatrix},
\end{equation*}
where $M_i = (X_i^t X_i + Y_i^t Y_i)^{-1/2}$.
We see that 
\begin{equation*}
2\mathcal{P}_i - I_{2n} =
\begin{pmatrix} 
2 X_i M_i^2 X_i^t - I_n & 2 X_i M_i^2 Y_i^t \\
2 Y_i M_i^2 X_i^t & 2 Y_i M_i^2 Y_i^t - I_n
\end{pmatrix}. 
\end{equation*}
Using the relations 
\begin{equation*}
\begin{aligned}
X_i M_i^2 X_i^t + Y_i M_i^2 Y_i^t &= I_n \\
X_i M_i^2 X_i^t - Y_i M_i^2 Y_i^t &= 0,
\end{aligned}
\end{equation*}
and temporarily setting 
\begin{equation*}
\begin{aligned}
A_i &= X_i M_i^2 X_i^t - Y_i M_i^2 Y_i^t \\
B_i &= 2 X_i M_i^2 Y_i^t,
\end{aligned}
\end{equation*}
we can check that 
\begin{equation*}
\begin{aligned}
(2 \mathcal{P}_1 - I_n) (2 \mathcal{P}_2 - I_n) &=
\begin{pmatrix}
A_1 & B_1 \\ B_1 & -A_1
\end{pmatrix}
\begin{pmatrix}
A_2 & B_2 \\ B_2 & -A_2
\end{pmatrix} \\
&= - 
\begin{pmatrix}
A_1 & -B_1 \\ B_1 & A_1
\end{pmatrix}
\begin{pmatrix}
-A_2 & -B_2 \\ B_2 & -A_2
\end{pmatrix} 
= - W.
\end{aligned}
\end{equation*}

In order to clarify the relationship between $W$ and $\tilde{W}$, we recall
that since $W \in \mathfrak{U}_J$ we have the correspondence
\begin{equation*}
W = 
\begin{pmatrix}
W_{11} & -W_{21} \\
W_{21} & W_{11}
\end{pmatrix};
\quad \iff
\tilde{W} = W_{11} + i W_{21}.
\end{equation*}
We can easily check that $W$ and $\tilde{W}$ have precisely the same
eigenvalues, and indeed we have 
\begin{equation*}
\tilde{W} (u+iv) = e^{i\theta} (u+iv)
\end{equation*}
if and only if 
\begin{equation*}
W {u+iv \choose v - iu} = e^{i\theta} {u+iv \choose v - iu} \quad
\text{and} \quad
W {-v + iu \choose u + iv} = e^{i\theta} {-v+iu \choose u + iv}.
\end{equation*}
I.e., $e^{i\theta}$ is an eigenvalue of $\tilde{W}$ with multiplicity $k$
if and only if it is an eigenvalue of $W$ with multiplicity $2k$. 
Notice that this simply generalizes our observations from the 
proof of Theorem \ref{intersection_theorem}.

\begin{remark} \label{souriau_remark}
We are now in a position to observe that our composition relation from 
Remark \ref{tildeW_remark} corresponds with Corollary 2.45 in \cite{F}.
In particular, if we let $\mathcal{P}_D$ denote projection onto the 
Dirichlet Lagrangian subspace (i.e., the Lagrangian subspace with 
frame ${0 \choose I}$), and we set 
\begin{equation*}
\begin{aligned}
W_{1D} &= - (2 \mathcal{P}_1 - I) (2 \mathcal{P}_D - I) \\
W_{D2} &= - (2 \mathcal{P}_D - I) (2 \mathcal{P}_2 - I), 
\end{aligned}
\end{equation*}
then Corollary 2.45 in \cite{F} asserts 
\begin{equation*}
W = - W_{1D} W_{D2},
\end{equation*}
which corresponds with the composition (\ref{composition}). 
(Here, $W$ is from (\ref{souriau}).)
\end{remark}

\subsection{Relation to Furutani's Development} \label{Furutani}

In \cite{F} (Section 3.5), the author takes a different approach to 
computing the Maslov index for a pair of evolving Lagrangian subspaces,
and we verify here that the two approaches are equivalent in the
current setting. As a starting point, we denote by $H_{\omega}$ the 
symplectic Hilbert space obtained by equipping $\mathbb{R}^{2n}$ with the 
symplectic form $\omega (x, y) = (Jx, y)_{\mathbb{R}^{2n}}$, and likewise
we denote by by $H_{-\omega}$ the 
symplectic Hilbert space obtained by equipping $\mathbb{R}^{2n}$ with the 
symplectic form $- \omega (x, y) = (-Jx, y)_{\mathbb{R}^{2n}}$. Following
\cite{F}, we denote the direct sum of these spaces
\begin{equation*}
\mathbb{H} = H_{\omega} \boxplus H_{-\omega}.
\end{equation*}
  
Now let $\ell_1, \ell_2 \subset \mathbb{R}^{2n}$ denote two Lagrangian
subspaces with associated frames $\mathbf{X}_1 = {X_1 \choose Y_1}$ 
and $\mathbf{X}_2 = {X_2 \choose Y_2}$. We can identify the direct 
sum $\ell_1 \oplus \ell_2$ with a subspace of $\mathbb{R}^{4n}$.
For $z_1, z_2 \in \mathbb{R}^{4n}$, we set 
\begin{equation*}
\omega_{\mathbb{J}} (z_1, z_2) = 
(\mathbb{J} z_1, z_2)_{\mathbb{R}^{4n}};
\quad
\mathbb{J} = 
\begin{pmatrix}
J & 0 \\
0 & -J
\end{pmatrix}.
\end{equation*}
It follows immediately from the assumption that $\ell_1$ and $\ell_2$ 
are Lagrangian subspaces in $\mathbb{R}^{2n}$ that 
\begin{equation*}
\mathbf{Z} = 
\begin{pmatrix}
\mathbf{X}_1 & 0_{2n \times n} \\
0_{2n \times n} & \mathbf{X}_2
\end{pmatrix}
\end{equation*}
is a frame for a Lagrangian subspace in $\mathbb{R}^{4n}$. We denote
this Lagrangian subspace $\ell$, and note that we can associate it 
with $\ell_1 \oplus \ell_2$.

In \cite{F}, the author detects intersections between $\ell_1$ and 
$\ell_2$ by identifying intersections between $\ell$ and the diagonal 
in $\mathbb{H}$: i.e., the Lagrangian subspace $\Delta \subset \mathbb{R}^{4n}$
with frame $\mathbf{Z}_{\Delta} = {I_{2n} \choose I_{2n}}$. The orthogonal
projection associated with $\ell$ can be expressed as 
\begin{equation*}
\mathcal{P}_{\mathbf{Z}} = \mathbf{Z} (\mathbf{Z}^t \mathbf{Z}) \mathbf{Z}^t
= 
\begin{pmatrix}
\mathcal{P}_1 & 0 \\
0 & \mathcal{P}_2
\end{pmatrix},
\end{equation*} 
and likewise the orthogonal projection associated with $\Delta$ can be expressed
as 
\begin{equation*}
\mathcal{P}_{\Delta} = \frac{1}{2}
\begin{pmatrix}
I_{2n} & I_{2n} \\
I_{2n} & I_{2n}
\end{pmatrix}.
\end{equation*}

We can now compute the Souriau map for $\ell$ and $\Delta$ as 
\begin{equation*}
\mathcal{W} = - (2 \mathcal{P}_{\mathbf{Z}} - I_{4n}) (2 \mathcal{P}_{\Delta} - I_{4n})
= 
\begin{pmatrix}
0 & I_{2n} - 2\mathcal{P}_1 \\
I_{2n} - 2\mathcal{P}_2  & 0
\end{pmatrix}.
\end{equation*}
We see that the eigenvalues of $\mathcal{W}$ will satisfy 
\begin{equation*}
\det 
\begin{pmatrix}
-\lambda I_{2n} & I_{2n} - 2\mathcal{P}_1  \\
I_{2n} - 2 \mathcal{P}_2  & -\lambda I_{2n}
\end{pmatrix}
= \det \Big(\lambda^2 I - (I_{2n} - 2\mathcal{P}_1) (I_{2n} - 2 \mathcal{P}_2)\Big).
\end{equation*}
We see that the values $- \lambda^2$ will be the eigenvalues of 
the Souriau map (\ref{souriau}). 

According to Lemma \ref{W_lemma} we have an intersection of $\ell_1$ and $\ell_2$  
if and only if $-1$ is an eigenvalue of $W$, and the multiplicity of
$-1$ as an eigenvalue of $W$ is twice the dimension of the intersection. In 
this case, we will have eigenvalues $\lambda$ of $\mathcal{W}$ satisfying 
$- \lambda^2 = -1$. We see that $\mathcal{W}$ has two 
corresponding eigenvalues $\lambda = -1, +1$, each with the same multiplicity
for $\mathcal{W}$ as $-1$ has for $W$. Reversing the argument, we conclude
that $-1$ is an eigenvalue of $W$ if and only if it is an eigenvalue of $\mathcal{W}$,
and its multiplicity as an eigenvalue of these two matrices agrees. 

Finally, we will be able to conclude that the spectral flow through $-1$
is the same for $W$ and $\mathcal{W}$ if the directions associated with 
crossings agree. Suppose $e^{i (\pi - \epsilon)}$ is an eigenvalue 
of $W$ for some small $\epsilon > 0$ (i.e., an eigenvalue rotated 
slightly clockwise from $-1$). If $\lambda$ is the associated eigenvalue
of $\mathcal{W}$ then we will have $- \lambda^2 = e^{i (\pi - \epsilon)}$,
and so $\lambda = e^{i (\pi - \frac{\epsilon}{2})}$, 
$e^{i (2\pi - \frac{\epsilon}{2})}$. If the eigenvalue of $W$ rotates
through $-1$ then its counterpart $e^{i (\pi - \frac{\epsilon}{2})}$
will rotate through $-1$ in the same direction. Other cases are 
similar, and we see that indeed the directions associated with the 
crossings agree.

\section{Monotoncity} \label{monotonicity_section}

For many applications, such as the ones discussed in Section 
\ref{applications_section}, we have monotonicity in the 
following sense: as the parameter $t \in I$ varies in a 
fixed direction, the eigenvalues of $\tilde{W} (t)$ move 
monotonically around $S^1$. In this section, we develop
a general framework for checking monotonicity in 
specific cases.

As a starting point, we take the following lemma from 
\cite{HS} (see also Theorem V.6.1 in \cite{At}): 

\begin{lemma} [\cite{HS}, Lemma 3.11.] 
Let $\tilde{W} (t)$ be a smooth family of unitary $n \times n$ matrices on 
some interval $I$, satisfying the differential equation 
$\frac{d}{dt} \tilde{W} (t) = i \tilde{W} (t) \tilde{\Omega} (t)$, 
where $\tilde{\Omega} (t)$ is a continuous, self-adjoint and negative-definite
$n \times n$ matrix. 
Then the eigenvalues of $\tilde{W} (t)$ move (strictly) monotonically clockwise on the 
unit circle as $\tau$ increases.  
\label{HS_monotonicity} 
\end{lemma}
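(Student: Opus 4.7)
The plan is to track each eigenvalue of $\tilde{W}(t)$ on $S^1$ as a function of $t$, compute its angular velocity directly from the differential equation, and show that this velocity is strictly negative. As a first step I would work locally near an arbitrary $t_0 \in I$ and invoke the fact that $\tilde{W}(t)$ is a smooth family of unitary matrices whose logarithmic derivative $\tilde{\Omega}(t) = -i\tilde{W}(t)^{*}\dot{\tilde{W}}(t)$ is self-adjoint. This places us in the Rellich/Kato perturbation framework for a self-adjoint generator, which guarantees, on a neighborhood of $t_0$, a smooth labeling of the eigenvalues $\lambda_k(t) = e^{i\theta_k(t)}$ together with a smooth orthonormal basis $\{v_k(t)\}$ of eigenvectors satisfying $\tilde{W}(t) v_k(t) = e^{i\theta_k(t)} v_k(t)$.

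The core computation is then short. Differentiating the eigenvalue relation in $t$ and substituting the hypothesis $\dot{\tilde{W}} = i\tilde{W}\tilde{\Omega}$ yields
\begin{equation*}
i\tilde{W}\tilde{\Omega} v_k + \tilde{W}\dot{v}_k = i\dot{\theta}_k e^{i\theta_k} v_k + e^{i\theta_k}\dot{v}_k.
\end{equation*}
Taking the inner product against $v_k$ and using the unitary identity $\tilde{W}^{*} v_k = e^{-i\theta_k} v_k$, the $\dot{v}_k$ contributions on the two sides agree and cancel, producing the clean formula
\begin{equation*}
\dot{\theta}_k(t) = \langle v_k(t), \tilde{\Omega}(t) v_k(t)\rangle.
\end{equation*}
Since $\tilde{\Omega}(t)$ is self-adjoint the right-hand side is real, and its strict negative-definiteness forces $\dot{\theta}_k(t) < 0$ for every $k$ and every $t$ where the local smooth decomposition is valid. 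This is precisely strict clockwise motion of each eigenvalue on $S^1$.

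The main obstacle is justifying the smooth parametrization of eigenvalues and eigenvectors across parameter values at which eigenvalues of $\tilde{W}(t)$ collide; for a generic smooth matrix family no such smooth branches need exist. Here, however, $\tilde{W}(t)$ is unitary (hence diagonalizable, with no Jordan obstructions) and its evolution is driven by the self-adjoint $\tilde{\Omega}$, which is exactly the hypothesis of Rellich's theorem and Kato's analytic perturbation theory and supplies the required smooth branches. Thus on the complement of an isolated exceptional set of parameter values the pointwise computation above delivers strict monotonicity, and continuity of the globally continuous eigenvalue curves propagates strict monotonicity across the exceptional times, completing the proof.
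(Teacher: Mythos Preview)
The paper does not supply its own proof of this lemma; it is quoted from \cite{HS} (with a parallel reference to Theorem V.6.1 in Atkinson \cite{At}). Your argument---differentiate the eigenvector relation, cancel the $\dot v_k$ terms, and read off $\dot\theta_k(t)=\langle v_k(t),\tilde\Omega(t)v_k(t)\rangle<0$---is exactly the standard computation behind those references, and the algebra is correct.

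One point deserves tightening. Rellich's theorem concerns \emph{real-analytic} one-parameter families of self-adjoint operators; here $\tilde W(t)$ is unitary and only assumed smooth (indeed, from the hypotheses $\tilde W$ is merely $C^1$ since $\tilde\Omega$ is only continuous), so invoking Rellich to produce smooth eigenvalue/eigenvector branches through crossings is not quite legitimate. The fix is the one you already gesture at in your final paragraph: away from eigenvalue collisions the spectral projections vary smoothly and your derivative formula holds pointwise; at a collision, Kato's first-order perturbation theory (cf.\ \cite[II.5]{Kato}, which the present paper also uses in its Appendix) shows that the eigenvalue group splitting from a degenerate eigenvalue $e^{i\theta_0}$ has branches $e^{i\theta_0+i(t-t_0)\mu_j+o(t-t_0)}$, where the $\mu_j$ are eigenvalues of the compression of $\tilde\Omega(t_0)$ to the $e^{i\theta_0}$-eigenspace. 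Negative-definiteness of $\tilde\Omega(t_0)$ forces every $\mu_j<0$, so each continuous branch has strictly negative one-sided derivative at every $t_0$, and strict clockwise monotonicity follows. With that adjustment your proof is complete and matches the cited sources.
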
 

In order to employ Lemma \ref{HS_monotonicity} we need to obtain a convenient 
form for $\frac{d \tilde{W}}{dt}$. For this, we begin by writing
$\tilde{W} (t) = - \tilde{W}_1 (t) \tilde{W}_2 (t)$, where 
\begin{equation*}
\begin{aligned}
\tilde{W}_1 (t) &= (X_1 (t) + i Y_1 (t)) (X_1 (t) - i Y_1 (t))^{-1} \\
\tilde{W}_2 (t) &= (X_2 (t) - i Y_2 (t)) (X_2 (t) + i Y_2 (t))^{-1}. 
\end{aligned}
\end{equation*}
For $\tilde{W}_1 (t)$ we have 
\begin{equation*}
\begin{aligned}
\frac{d \tilde{W}_1}{dt} 
& = (X_1' (t) + i Y_1' (t)) (X_1 (t) - i Y_1 (t))^{-1} \\
& \quad - (X_1 (t) + i Y_1 (t)) (X_1 (t) - i Y_1 (t))^{-1} (X_1' (t) - i Y_1' (t)) (X_1 (t) - i Y_1 (t))^{-1} \\
& = (X_1' (t) + i Y_1' (t)) (X_1 (t) - i Y_1 (t))^{-1} \\
& \quad - \tilde{W}_1 (X_1' (t) - i Y_1' (t)) (X_1 (t) - i Y_1 (t))^{-1} \\
& = \tilde{W}_1 \tilde{W}_1^* (X_1' (t) + i Y_1' (t)) (X_1 (t) - i Y_1 (t))^{-1} \\
& \quad - \tilde{W}_1 (X_1' (t) - i Y_1' (t)) (X_1 (t) - i Y_1 (t))^{-1} \\
& = \tilde{W}_1 \Big{\{}\tilde{W}_1^* (X_1' (t) + i Y_1' (t)) - (X_1' (t) - i Y_1' (t)) \Big{\}} 
(X_1 (t) - i Y_1 (t))^{-1},
\end{aligned}
\end{equation*}
where we have liberally taken advantage of the fact that $\tilde{W}$ is unitary. Here, 
\begin{equation*}
\begin{aligned}
\{ \cdots \} &= (X_1 (t)^t + i Y_1 (t)^t)^{-1} (X_1 (t)^t - i Y_1 (t)^t) (X_1' (t) + i Y_1' (t)) 
- (X_1' (t) - i Y_1' (t)) \\
&= (X_1 (t)^t + i Y_1 (t)^t)^{-1} 
\Big[ 
(X_1 (t)^t - i Y_1 (t)^t) (X_1' (t) + i Y_1' (t)) \\
& \quad \quad - (X_1 (t)^t + i Y_1 (t)^t) (X_1' (t) - i Y_1' (t)) 
\Big],
\end{aligned}
\end{equation*}
and 
\begin{equation*}
[ \cdots ] = 2i (X_1 (t)^t Y_1' (t) - Y_1 (t)^t X_1'(t)).
\end{equation*}
We conclude that 
\begin{equation*}
\frac{d \tilde{W}_1}{dt} = i \tilde{W}_1 (t) \tilde{\Omega}_1 (t), 
\end{equation*}
where 
\begin{equation*}
\tilde{\Omega}_1 (t) 
= 2 \Big((X_1 (t) - i Y_1 (t))^{-1}\Big)^*
\Big(X_1 (t)^t Y_1' (t) - Y_1 (t)^t X_1'(t) \Big)
\Big((X_1 (t) - i Y_1 (t))^{-1} \Big). 
\end{equation*}
 
Proceeding similarly for $\tilde{W}_2 (t)$ we find 
\begin{equation*}
\frac{d \tilde{W}_2}{dt} = i \tilde{W}_2 (t) \tilde{\Omega}_2 (t), 
\end{equation*}
where 
\begin{equation*}
\tilde{\Omega}_2 (t) 
= - 2 \Big((X_2 (t) + i Y_2 (t))^{-1}\Big)^*
\Big(X_2 (t)^t Y_2' (t) - Y_2 (t)^t X_2'(t) \Big)
\Big((X_2 (t) + i Y_2 (t))^{-1} \Big). 
\end{equation*}

Combining these observations, we compute 
\begin{equation*}
\begin{aligned}
\frac{d \tilde{W}}{dt} &= - \frac{d \tilde{W}_1}{dt} \tilde{W}_2 
- \tilde{W}_1 \frac{d \tilde{W}_2}{dt} \\
&= -i \tilde{W}_1 (t) \tilde{\Omega}_1 (t) \tilde{W}_2 (t) 
- i \tilde{W}_1 (t) \tilde{W}_2 (t) \tilde{\Omega}_2 (t) \\
&=  i (- \tilde{W}_1 (t) \tilde{W}_2 (t)) \tilde{W}_2 (t)^* \tilde{\Omega}_1 (t) \tilde{W}_2 (t) 
+ i (- \tilde{W}_1 (t) \tilde{W}_2 (t)) \tilde{\Omega}_2 (t) \\
&= i \tilde{W} (t) \Big{\{}\tilde{W}_2 (t)^* \tilde{\Omega}_1 (t) \tilde{W}_2 (t) + \tilde{\Omega}_2 (t) \Big{\}}.
\end{aligned}
\end{equation*}
That is, we have 
\begin{equation*}
\frac{d \tilde{W}}{dt} = i \tilde{W} (t) \tilde{\Omega} (t),
\end{equation*}
where 
\begin{equation*}
\tilde{\Omega} (t) = \tilde{W}_2 (t)^* \tilde{\Omega}_1 (t) \tilde{W}_2 (t) + \tilde{\Omega}_2 (t).
\end{equation*}
We notice particularly that we can write 
\begin{equation*}
\tilde{W}_2^* \tilde{\Omega}_1 \tilde{W}_2
= 2 \Big((X_1 - iY_1)^{-1} \tilde{W}_2\Big)^* (X_1^t Y_1' - Y_1^t X_1') 
\Big((X_1 - iY_1)^{-1} \tilde{W}_2\Big).
\end{equation*}

We see that the nature of $\tilde{\Omega} (t)$ will be determined by the matrices 
$(X_1 (t)^t Y_1' (t) - Y_1 (t)^t X_1'(t))$ and $(X_2 (t)^t Y_2' (t) - Y_2 (t)^t X_2'(t))$. 
In order to check that these matrices are symmetric, we differentiate the Lagrangian 
property
\begin{equation*}
X_1 (t)^t Y_1 (t) - Y_1 (t)^t X_1 (t) = 0
\end{equation*}
to see that 
\begin{equation*}
X_1 (t)^t Y_1' (t) - Y_1 (t)^t X_1'(t) 
= Y_1'(t)^t X_1 (t) - X_1'(t)^t Y_1 (t).
\end{equation*}
Symmetry of $(X_1 (t)^t Y_1' (t) - Y_1 (t)^t X_1'(t))$ is immediate, and we proceed 
similarly for $(X_2 (t)^t Y_2' (t) - Y_2 (t)^t X_2'(t))$. We conclude that
$\tilde{\Omega} (t)$ is self-adjoint.

Finally, for monontonicity, we need to check that $\tilde{\Omega} (t)$ is definite.
We show how to do this in certain cases in Section \ref{applications_section}. For 
convenient reference, we summarize these observations into a lemma.

\begin{lemma} Suppose $\ell_1, \ell_2: I \to \Lambda (n)$ denote paths of 
Lagrangian subspaces with $C^1$ frames $\mathbf{X}_1 = {X_1 \choose Y_1}$
and $\mathbf{X}_2 = {X_2 \choose Y_2}$ (respectively). If the matrices 
\begin{equation*}
- \mathbf{X}_1^t J \mathbf{X}_1 = X_1 (t)^t Y_1' (t) - Y_1 (t)^t X_1'(t)
\end{equation*}
and (noting the sign change)
\begin{equation*}
\mathbf{X}_2^t J \mathbf{X}_2 = - (X_2 (t)^t Y_2' (t) - Y_2 (t)^t X_2'(t))
\end{equation*}
are both non-negative and at least one is positive definite then the eigenvalues
of $\tilde{W} (t)$ rotate in the counterclockwise direction as $t$ increases. 
Likewise, if both of these matrices are non-positive, and at least one is negative definite 
then the eigenvalues of $\tilde{W} (t)$ rotate in the clockwise direction as 
$t$ increases.
\end{lemma}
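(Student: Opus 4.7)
The plan is to invoke Lemma \ref{HS_monotonicity} (and its obvious positive-definite counterpart, which gives counterclockwise rotation when $\tilde{\Omega}(t)$ is positive definite), so the entire task reduces to checking the sign of $\tilde{\Omega}(t)$ under each hypothesis. The formula
\begin{equation*}
\tilde{\Omega}(t) = \tilde{W}_2(t)^* \tilde{\Omega}_1(t) \tilde{W}_2(t) + \tilde{\Omega}_2(t)
\end{equation*}
derived above expresses $\tilde{\Omega}$ as a sum of two Hermitian pieces, and I would argue each piece separately.

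First I would observe that the matrix
\begin{equation*}
S_1(t) := X_1(t)^t Y_1'(t) - Y_1(t)^t X_1'(t)
\end{equation*}
is real symmetric (as shown just above the lemma) and that
\begin{equation*}
\tilde{\Omega}_1(t) = 2\, A_1(t)^* S_1(t)\, A_1(t), \qquad A_1(t) := (X_1(t) - i Y_1(t))^{-1}.
\end{equation*}
Since $A_1(t)$ is invertible, conjugation by $A_1$ preserves both semi-definiteness and strict definiteness; hence $S_1 \geq 0$ implies $\tilde{\Omega}_1 \geq 0$ and $S_1 > 0$ implies $\tilde{\Omega}_1 > 0$. The unitary conjugation $\tilde{W}_2^*(\cdot)\tilde{W}_2$ then preserves these properties, giving the corresponding signs for the first summand of $\tilde{\Omega}$. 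A parallel argument handles
\begin{equation*}
\tilde{\Omega}_2(t) = -2\, A_2(t)^* S_2(t)\, A_2(t), \qquad S_2(t) := X_2(t)^t Y_2'(t) - Y_2(t)^t X_2'(t),\quad A_2(t) := (X_2(t) + i Y_2(t))^{-1},
\end{equation*}
with the minus sign accounting for why the relevant hypothesis is on $-S_2$ rather than $S_2$.

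Assembling these observations: under the first hypothesis both $S_1$ and $-S_2$ are positive semi-definite, so both summands of $\tilde{\Omega}(t)$ are positive semi-definite; if in addition at least one is strictly positive definite, then the corresponding summand is positive definite (conjugation by invertible matrices does not affect this), and the sum of a positive definite and a positive semi-definite matrix is positive definite. Therefore $\tilde{\Omega}(t) > 0$ and the positive-definite analog of Lemma \ref{HS_monotonicity} yields strict counterclockwise rotation of the eigenvalues of $\tilde{W}(t)$. The clockwise case is entirely symmetric, with all signs reversed.

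The only nontrivial point, and the one I expect to need the most care, is justifying the positive-definite version of the monotonicity lemma from \cite{HS}, since the cited statement is phrased for the negative-definite case. I would handle this either by a one-line remark (replacing $t$ by $-t$ reverses the direction of rotation) or, more transparently, by the standard calculation: if $\tilde{W}(t)v(t) = e^{i\theta(t)}v(t)$ with $v(t)$ chosen of unit norm, differentiating and pairing with $v(t)$ gives
\begin{equation*}
\theta'(t) = (\tilde{\Omega}(t) v(t), v(t))_{\mathbb{C}^n},
\end{equation*}
from which positive (resp.\ negative) definiteness of $\tilde{\Omega}(t)$ yields $\theta'(t) > 0$ (resp.\ $\theta'(t) < 0$) uniformly in the choice of eigenvector. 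Beyond this, the argument is a routine bookkeeping of signs and invertibility.
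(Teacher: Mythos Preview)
Your proposal is correct and follows essentially the same approach as the paper: the lemma in the paper is explicitly stated as a summary of the preceding computations, which derive $\tilde{\Omega} = \tilde{W}_2^*\tilde{\Omega}_1\tilde{W}_2 + \tilde{\Omega}_2$ with $\tilde{\Omega}_j$ given by exactly the congruence formulas you wrote down, and then appeal to Lemma~\ref{HS_monotonicity}. Your extra care in noting that the positive-definite case follows from the negative-definite one (via $t \mapsto -t$ or the direct $\theta'(t) = (\tilde{\Omega}(t)v(t),v(t))$ computation) is a detail the paper leaves implicit.
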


\subsection{Monotonicity at Crossings} \label{monotonicity_at_crossings}

We are often interested in the rotation of eigenvalues of $\tilde{W}$ through 
$-1$; i.e., the rotation associated with an intersection of our Lagrangian 
subspaces. Let $t_*$ denote the time of intersection. 
As discussed in \cite{HS}, if we let $\tilde{\mathcal{P}}$ 
denote projection onto $\ker (\tilde{W} + I)$, then the rotation of 
eigenvalues through $-1$ is determined by the eigenvalues of the 
matrix $\tilde{\mathcal{P}} \tilde{\Omega} (t_*) \tilde{\mathcal{P}}$.
Notice that if 
$\tilde{v} \in \ker (\tilde{W} + I)$ we will have 
\begin{equation*}
- (X_1 (t_*) + i Y_1 (t_*)) (X_1 (t_*) - i Y_1 (t_*))^{-1}
(X_2 (t_*) - i Y_2 (t_*)) (X_2 (t_*) + i Y_2 (t_*))^{-1} \tilde{v}
= - \tilde{v},
\end{equation*}
and correspondingly
\begin{equation*}
(X_1 (t_*) - i Y_1 (t_*))^{-1} \tilde{W}_2 (t_*) \tilde{v} 
= (X_1 (t_*) + i Y_1 (t_*))^{-1} \tilde{v}.
\end{equation*}
Recalling relations (\ref{useful2}), we find that 
\begin{equation*}
(X_1 (t_*) + i Y_1 (t_*))^{-1} \tilde{v}
= M_1 (t_*)^2 (X_1 (t_*)^t - i Y_1 (t_*)^t) \tilde{v}.
\end{equation*}
We see that if $\tilde{\Omega} (t_*)$ acts on $\ker (\tilde{W}+I)$
we can replace it with 
\begin{equation*}
\begin{aligned}
\tilde{\Omega}_{\mathcal{P}} (t_*) &:=
2 \Big(M_1 (t_*)^2 (X_1 (t_*)^t - Y_1 (t_*)^t) \Big)^*
\Big(X_1 (t_*)^t Y_1' (t_*) - Y_1^t (t_*) X_1' (t_*) \Big) \\
& \quad \times 
M_1 (t_*)^2 (X_1 (t_*)^t - Y_1 (t_*)^t) \\
&-2 \Big(M_2 (t_*)^2 (X_2 (t_*)^t - Y_2 (t_*)^t) \Big)^*
\Big(X_2 (t_*)^t Y_2' (t_*) - Y_2^t (t_*) X_2' (t_*) \Big) \\
& \quad \times
M_2 (t_*)^2 (X_2 (t_*)^t - Y_2 (t_*)^t). 
\end{aligned}
\end{equation*}

If we express $\tilde{v} = v_1 + i v_2$, we can write
\begin{equation*}
\begin{aligned}
(X_1 (t_*) - i Y_1 (t_*))^{-1} \tilde{W}_2 (t_*) \tilde{v} 
&= 
M_1 (t_*)^2 (X_1 (t_*)^t - i Y_1 (t_*)^t) (v_1 + iv_2) \\
&= M_1 (t_*)^2 \Big{\{}X_1 (t_*)^t v_1 + Y_1 (t_*)^t v_2
+ i (X_1 (t_*)^t v_2 - Y_1 (t_*)^t v_1) \Big{\}} \\
&= M_1 (t_*)^2 \Big{\{}X_1 (t_*)^t v_1 + Y_1 (t_*)^t v_2 \Big{\}}.
\end{aligned}
\end{equation*}
Here, we have observed that it follows from the Lagrangian 
property that $X_1 (t_*)^t v_2 - Y_1 (t_*)^t v_1 = 0$. Likewise,
\begin{equation*}
M_2 (t_*)^2 (X_2 (t_*)^t - Y_2 (t_*)^t) (\tilde{v})
= M_2 (t_*)^2 \Big{\{}X_2 (t_*)^t v_1 + Y_2 (t_*)^t v_2 \Big{\}}.
\end{equation*}

If we now write 
\begin{equation*}
\tilde{\Omega}_{\mathcal{P}} (t_*) =
\tilde{\Omega}^{(1)}_{\mathcal{P}} (t_*) + \tilde{\Omega}^{(2)}_{\mathcal{P}} (t_*),
\end{equation*}
then the quadratic form associated with $\tilde{\Omega}^{(1)}_{\mathcal{P}} (t_*)$
will take the form 
\begin{equation} \label{omega1}
\begin{aligned}
\Big(\tilde{\Omega}^{(1)}_{\mathcal{P}} (t_*) \tilde{v}, \tilde{v} \Big)_{\mathbb{C}^n} 
&= 2 \Big( (X_1 (t_*)^t Y_1' (t_*) - Y_1^t (t_*) X_1' (t_*)) M_1 (t_*)^2 \Big{\{}X_1 (t_*)^t v_1 + Y_1 (t_*)^t v_2 \Big{\}}, \\
&\quad \quad  M_1 (t_*)^2 \Big{\{}X_1 (t_*)^t v_1 + Y_1 (t_*)^t v_2 \Big{\}} \Big)_{\mathbb{C}^n} ,
\end{aligned}
\end{equation}
and likewise the quadratic form associated with $\tilde{\Omega}^{(2)}_{\mathcal{P}} (t_*)$
will take the form 
\begin{equation} \label{omega2}
\begin{aligned}
\Big(\tilde{\Omega}^{(2)}_{\mathcal{P}} (t_*) \tilde{v}, \tilde{v} \Big)_{\mathbb{C}^n} 
&= 2 \Big( (X_2 (t_*)^t Y_2' (t_*) - Y_2^t (t_*) X_2' (t_*)) M_2 (t_*)^2 \Big{\{}X_2 (t_*)^t v_1 + Y_2 (t_*)^t v_2 \Big{\}}, \\
& \quad \quad M_2 (t_*)^2 \Big{\{}X_2 (t_*)^t v_1 + Y_2 (t_*)^t v_2 \Big{\}} \Big)_{\mathbb{C}^n}.
\end{aligned}
\end{equation}
We will use (\ref{omega1}) and (\ref{omega2}) in our next section in which we relate
our approach to the development of \cite{rs93}, based on crossing forms.

\subsection{Relation to Crossing Forms} \label{crossing_forms_section}

In this section, we discuss the relation between our development and 
the crossing forms of \cite{rs93}. As a starting point, let $\ell_1 (t)$ 
denote a path of Lagrangian subspaces, and let $\ell_2$ denote a fixed
{\it target} Lagrangian subspace. Let the respective frames be 
\begin{equation*}
\mathbf{X}_1 (t) = {X_1 (t) \choose Y_1 (t)}; \quad
\mathbf{X}_2 = {X_2 \choose Y_2}, 
\end{equation*}
and let $t_*$ denote the time of a crossing; i.e., 
\begin{equation*}
\ell_1 (t_*) \cap \ell_2 \ne \{0\}.
\end{equation*} 
The corresponding matrix $\tilde{W} (t)$ will be 
\begin{equation*}
\tilde{W} (t) = - 
(X_1 (t) + i Y_1 (t)) (X_1 (t) - i Y_1 (t))^{-1}
(X_2 - i Y_2) (X_2 + i Y_2)^{-1}.
\end{equation*}
Our goal is to compare the information obtained by computing 
$\tilde{W}'(t_*)$ with the information we get from the crossing form 
at $t_*$.

Following \cite{rs93}, we construct the crossing form at 
$t_*$ as a map 
\begin{equation*}
\Gamma (\ell_1, \ell_2; t_*): \ell_1 (t_*) \cap \ell_2
\to \mathbb{R}
\end{equation*}
defined as follows: given $v \in \ell_1 (t_*) \cap \ell_2$, we find
$u \in \mathbb{R}^n$ so that $v = \mathbf{X}_1 (t_*) u$,
and compute 
\begin{equation*}
\begin{aligned}
\Gamma (\ell_1, \ell_2; t_*) (v) &= 
(X_1 (t_*) u, Y_1' (t_*) u)_{\mathbb{R}^n} - (X_1 (t_*) u, Y_1' (t_*) u)_{\mathbb{R}^n} \\
&= 
\Big( (X_1 (t_*)^t Y_1' (t_*) - Y_1 (t_*)^t X_1' (t_*)) u, u \Big).
\end{aligned}
\end{equation*}
Since $v \in \ell_1 (t_*) \cap \ell_2 \subset \ell_1 (t_*)$ the vector 
$u$ is uniquely defined and we can compute it in terms of the 
Moore-Penrose pseudo-inverse of $\mathbf{X}_1$,
\begin{equation*}
u = (\mathbf{X}_1^t \mathbf{X}_1)^{-1} \mathbf{X}_1^t v
= M_1^2 (X_1^t v_1 + Y_1^t v_2),
\end{equation*}
where $v = {v_1 \choose v_2}$. 

Comparing with (\ref{omega1}), and taking $\mathbf{X}_2$ in this setting 
to be $\mathbf{X}_2 (t_*)$ in the setting of (\ref{omega1}), we see that 
\begin{equation} \label{crossing_form_relation}
\Gamma (\ell_1, \ell_2; t_*) (v) = 
\frac{1}{2} \Big(\tilde{\Omega}^{(1)}_{\mathcal{P}} (t_*) \tilde{v}, \tilde{v} \Big)_{\mathbb{C}^n}.
\end{equation}
When computing the Maslov index with crossing forms, the rotation of eigenvalues
of $\tilde{W}$ through $-1$ is determined by the signature of the crossing 
form. We see from (\ref{crossing_form_relation}) that this information 
is encoded in the eigenvalues of $\tilde{\Omega}^{(1)}_{\mathcal{P}} (t_*)$.

Turning now to path pairs, we recall that in \cite{rs93} the crossing form 
for a pair of Lagrangian paths $\ell_1 (t)$ and $\ell_2 (t)$ is defined 
as 
\begin{equation*}
\Gamma (\ell_1, \ell_2; t_*)
= \Gamma (\ell_1, \ell_2 (t_*); t_*) - \Gamma (\ell_2, \ell_1 (t_*); t_*).
\end{equation*}
Here, $\ell_2 (t_*)$ is viewed as a constant Lagrangian subspace, so that 
our previous development can be applied to $\Gamma (\ell_1, \ell_2 (t_*); t_*)$,
and similary for $\Gamma (\ell_2, \ell_1 (t_*); t_*)$, in which case
$\ell_1 (t_*)$ is viewed as a constant Lagrangian subspace. In the previous
calculations, we have already checked that 
\begin{equation*}
\Gamma (\ell_1, \ell_2 (t_*); t_*) (v) = \frac{1}{2} \Big(\tilde{\Omega}^{(1)}_{\mathcal{P}} (t_*) \tilde{v}, \tilde{v} \Big)_{\mathbb{C}^n},
\end{equation*}
and we similarly find that 
\begin{equation*}
\Gamma (\ell_2, \ell_1 (t_*); t_*) (v) =
\frac{1}{2} \Big(\tilde{\Omega}^{(2)}_{\mathcal{P}} (t_*) \tilde{v}, \tilde{v} \Big)_{\mathbb{C}^n}. 
\end{equation*}
Combining these expressions, we see that the crossing form for the Lagrangian 
pair $(\ell_1 (t), \ell_2 (t))$ at a crossing point $t_*$ is 
\begin{equation*}
\Gamma (\ell_1, \ell_2; t_*) = \frac{1}{2} \Big(\tilde{\Omega}_{\mathcal{P}} (t_*) \tilde{v}, \tilde{v} \Big)_{\mathbb{C}^n}.
\end{equation*}

\section{Applications} \label{applications_section}

Although full applications will be carried out in separate papers, we indicate 
two motivating applications for completeness.

\smallskip
\noindent
{\bf Application 1.} In \cite{HS}, the authors consider Schr\"odinger equations
\begin{equation} \label{schrodinger_01}
\begin{aligned}
-y'' + V(x) y &= \lambda y \\
\alpha_1 y(0) + \alpha_2 y'(0) &= 0 \\
\beta_1 y(1) + \beta_2 y'(1) &= 0,
\end{aligned}
\end{equation}
where $V \in C([0, 1])$ is a real-valued symmetric matrix, 
\begin{equation} \label{rank_condition}
\text{rank} \begin{bmatrix} \alpha_1 & \alpha_2 \end{bmatrix} = n; 
\quad \text{rank} \begin{bmatrix} \beta_1 & \beta_2 \end{bmatrix} = n,
\end{equation}
and we assume separated, self-adjoint boundary conditions, for which 
we have  
\begin{equation} \label{sac}
\begin{aligned}
\alpha_1 \alpha_2^t - \alpha_2 \alpha_1^t &= 0; \\
\beta_1 \beta_2^t - \beta_2 \beta_1^t &= 0.
\end{aligned}
\end{equation}
By a choice of scaling we can take, without loss of generality, 
\begin{equation*}
\begin{aligned}
\alpha_1 \alpha_1^t + \alpha_2 \alpha_2^t &= I; \\
\beta_1 \beta_1^t + \beta_2 \beta_2^t &= I.
\end{aligned}
\end{equation*}

In order to place this system in the current framework, we set $p = y$, $q = y'$, 
and $\mathbf{p} = {p \choose q}$, so that it can be expressed as a first-order
system 
\begin{equation} \label{system01}
\frac{d \mathbf{p}}{dx} = \mathbb{A} (x; \lambda) \mathbf{p}; 
\quad \mathbb{A} (x; \lambda)
=
\begin{pmatrix}
0 & I \\
V(x) - \lambda I & 0
\end{pmatrix}.
\end{equation}
Since $\text{rank} \begin{bmatrix} \alpha_1 & \alpha_2 \end{bmatrix} = n$, 
there exists an $n$-dimensional space of solutions to the left boundary condition
\begin{equation*}
\begin{bmatrix} \alpha_1 & \alpha_2 \end{bmatrix} \mathbf{p} (0) = 0
\end{equation*}
(i.e., the kernel of $\begin{bmatrix} \alpha_1 & \alpha_2 \end{bmatrix}$). 
In particular, we see from (\ref{sac}) that we can take 
\begin{equation*}
\mathbf{X}_1 (0,\lambda) 
= \begin{pmatrix} \alpha_2^t \\ - \alpha_1^t \end{pmatrix}. 
\end{equation*}
By virtue of the Lagrangian property, we see that $\mathbf{X}_1 (0; \lambda)$
is the frame for a Lagrangian subspace. 

Let $\mathbf{X}_1 (x, \lambda)$ be a path of frames created by starting
with $\mathbf{X}_1 (0, \lambda)$ and evolving according to (\ref{system01}). 
In order to see that $\mathbf{X}_1 (x, \lambda)$ continues to be a frame
for a Lagrangian subspace for all $x \in [0,1]$, we begin by setting 
\begin{equation*}
Z(x,\lambda) = X_1(x,\lambda)^t Y_1(x,\lambda) - Y_1(x, \lambda)^t X_1(x, \lambda),
\end{equation*}
and noting that $Z(0,\lambda) = 0$. Also (using prime to denote differentiation 
with respect to $x$), 
\begin{equation*}
\begin{aligned}
Z' &= (X_1')^t Y_1 + X_1^t Y_1' - (Y_1')^t X_1 - Y_1^t X_1' \\
&= Y_1^t Y_1 + X_1^t (V(x) X_1 - \lambda X_1) - (V(x) X_1 - \lambda X_1)^t X_1 - Y_1^t Y_1 \\
&= 0, 
\end{aligned}
\end{equation*}
where we have observed $X_1' = Y_1$, $Y_1' = V(x) X_1 - \lambda X_1$, and have used 
our assumption that $V$ is symmetric. We see that $Z(x,\lambda)$ is constant
in $x$, and since $Z(0,\lambda) = 0$ this means $Z(x,\lambda) = 0$ for 
all $x \in [0,1]$. We conclude from Lemma \ref{Lagrangian_property} that 
$\mathbf{X}_1 (x, \lambda)$ is the frame for a Lagrangian subspace for all 
$x \in [0,1]$. As usual, we denote the Lagrangian subspace associated with 
$\mathbf{X}_1$ by $\ell_1$. 

In this case, the second (``target") Lagrangian subspace is the one associated 
with the boundary conditions at $x = 1$. I.e., 
\begin{equation*}
\mathbf{X}_2 = {X_2 \choose Y_2} = {\beta_2^t \choose -\beta_1^t},
\end{equation*}
which is Lagrangian due to our boundary condition and the Lagrangian 
property. We denote the Lagrangian subspace associated with $\mathbf{X}_2$
by $\ell_2$. We find that 
\begin{equation*}
\tilde{W} (x,\lambda) = - (X_1(x,\lambda) + i Y_1(x,\lambda)) (X_1(x,\lambda) - i Y_1(x,\lambda))^{-1}
(\beta_2^t + i \beta_1^t) (\beta_2^t - i \beta_1^t)^{-1}.
\end{equation*}
For comparison with \cite{HS}, we observe that 
\begin{equation} \label{compareHS}
(\beta_2^t + i \beta_1^t) (\beta_2^t - i \beta_1^t)^{-1} 
= \beta_2^t \beta_2 - \beta_1^t \beta_1 + 2 i (\beta_2^t \beta_1), 
\end{equation}
and this right-hand side, along with the negative sign, is the form that 
appears in \cite{HS} (see p. 4517). In order to verify (\ref{compareHS}), we directly
compute
\begin{equation*}
(\beta_2 + i \beta_1) (\beta_2^t - i \beta_1^t)
= \beta_2 \beta_2^t + \beta_1 \beta_1^t + i (\beta_1 \beta_2^t - \beta_2 \beta_1^t)
= I,
\end{equation*}
showing that 
\begin{equation*}
(\beta_2^t - i \beta_1^t)^{-1} = (\beta_2 + i \beta_1).
\end{equation*}
But then 
\begin{equation*}
\begin{aligned}
(\beta_2^t + i \beta_1^t) (\beta_2^t - i \beta_1^t)^{-1} 
&= (\beta_2^t + i \beta_1^t) (\beta_2 + i \beta_1) \\
&= \beta_2^t \beta_2 - \beta_1^t \beta_1 + i (\beta_2^t \beta_1 + \beta_1^t \beta_2) \\
&= \beta_2^t \beta_2 - \beta_1^t \beta_1 + 2 i (\beta_2^t \beta_1).
\end{aligned}
\end{equation*}
(These are the same considerations that led to (\ref{useful2}).)

Turning to the important property of monotoncity, we see that we can consider
monotonicity as $x$ varies or as $\lambda$ varies (or, in principle, we could 
consider any other path in the $x$-$\lambda$ plane). We find that while 
monotoncity doesn't generally hold as $x$ varies (except in special cases, 
such as Dirichlet boundary conditions), it does hold generally as $\lambda$
varies. In order to see this, we observe that in light of Section 
\ref{monotonicity_section} we can write 
\begin{equation*}
\frac{\partial \tilde{W}}{\partial \lambda} = i \tilde{W} \tilde{\Omega},
\end{equation*}
where 
\begin{equation*}
\tilde{\Omega} = 2 \Big((X_1-iY_1)^{-1} \tilde{W}_2 \Big)^*
\Big(X_1^t \partial_{\lambda} Y_1 - Y_1^t \partial_{\lambda} X_1 \Big)
\Big((X_1 - iY_1)^{-1} \tilde{W}_2 \Big),
\end{equation*}
and
\begin{equation*}
\tilde{W}_2 = (\beta_2^t + i \beta_1^t) (\beta_2^t - i \beta_1^t)^{-1}.
\end{equation*}
We see that monotonicity is determined by the matrix 
\begin{equation*}
A (x, \lambda) = X_1 (x,\lambda)^t \partial_{\lambda} Y_1 (x,\lambda) 
- Y_1 (x,\lambda)^t \partial_{\lambda} X_1 (x,\lambda),
\end{equation*}
where our introduction of the notation $A(x,\lambda)$ is simply for the convenience
of the next calculation. Differentiating with respect to $x$, we find 
\begin{equation*}
\begin{aligned}
A' &= (X_1')^t \partial_{\lambda} Y_1 + X_1^t \partial_{\lambda} Y_1' 
- (Y_1')^t \partial_{\lambda} X_1 - Y_1^t \partial_{\lambda} X_1' \\
&= Y_1^t \partial_{\lambda} Y_1 + X_1^t \partial_{\lambda} (V(x) X_1 - \lambda X_1) 
- (V(x) X_1 - \lambda X_1)^t \partial_{\lambda} X_1 - Y_1^t \partial_{\lambda} Y_1 \\
&= - X_1^t X_1. 
\end{aligned}
\end{equation*}

Integrating on $[0,x]$, we find
\begin{equation*}
A (x,\lambda) = X_1(0,\lambda)^t \partial_{\lambda} Y_1 (0,\lambda) 
- Y_1(0,\lambda)^t \partial_{\lambda} X_1 (0,\lambda)
- \int_0^x X_1(y,\lambda)^t X_1(y,\lambda) dy.  
\end{equation*}
We observe that since $X_1(0,\lambda) = \alpha_2^t$ and $Y_1(0,\lambda) = -\alpha_1^t$, we 
have $\partial_{\lambda} X_1 (0,\lambda) = 0$ and $\partial_{\lambda} Y_1 (0,\lambda) = 0$, and so 
\begin{equation*}
A(x, \lambda) = - \int_0^x X_1(y,\lambda)^t X_1(y,\lambda) dy,
\end{equation*} 
which is negative definite. We conclude that $\tilde{\Omega}$ is negative definite, and so 
for any $x \in [0,1]$, as $\lambda$ increases the eigenvalues of $\tilde{W}$ rotate monotonically
in the clockwise direction.

In order to summarize the result that these observations lead to,  
we will find it productive to fix $s_0 > 0$ (taken sufficiently small
during the analysis) and $\lambda_{\infty} > 0$ (taken sufficiently
large during the analysis), and to consider the rectangular path 
\begin{equation*}
\Gamma = \Gamma_1 \cup \Gamma_2 \cup \Gamma_3 \cup \Gamma_4,
\end{equation*}
where the paths $\{\Gamma_i\}_{i=1}^4$ are depicted in 
Figure \ref{F1} (taken from \cite{HS}).

\begin{figure}[h]
 \scalebox{1.25}{
\begin{picture}(100,100)(-20,0)
\put(-75,-2){$-\lambda_{\infty}$}
\put(-65,8){\line(0,1){4}}
\put(80,5){\vector(0,1){95}}
\put(-65,20){\line(0,1){60}}
\put(-65,80){\vector(0,-1){40}}
\put(-74,10){\vector(1,0){190}}
\put(70.5,40){\text{\tiny $\Gamma_2$}}
\put(-63,60){\text{\tiny $\Gamma_4$}}
\put(-82,24){\rotatebox{90}{\text{\tiny no conjugate}}}
\put(-75,34){\rotatebox{90}{\text{\tiny points}}}
\put(84,34){\rotatebox{90}{\text{\tiny conjugate}}}
\put(91,40){\rotatebox{90}{\text{\tiny points}}}
\put(45,73){\text{\tiny $\Gamma_3$}}
\put(43,13){\text{\tiny $\Gamma_1$}}
\put(100,12){$\lambda$}
\put(83,98){$s$}
\put(80,20){\vector(0,1){30}}
\put(83,-5){$0$}
\put(-65,20){\line(1,0){145}}
\put(-10,20){\vector(1,0){50}}
\put(-65,80){\line(1,0){145}}
\put(80,80){\vector(-1,0){55}}
\put(82,78){$1$}
\put(82,18){$s_0 $}
\put(70,20){\circle*{4}}
\put(80,60){\circle*{4}}
\put(80,70){\circle*{4}}
\put(20,80){\circle*{4}}
\put(40,80){\circle*{4}}
\put(60,80){\circle*{4}}
\put(-5,87){{\tiny \text{$H$-eigenvalues}}}
\put(-60,24){{\tiny \text{$V(0) - (P_{R_0} \Lambda_0 P_{R_0})^2,B$-eigenvalues}}}
\end{picture}}
\caption{Schematic of the path $\Gamma = \Gamma_1 \cup \Gamma_2 \cup \Gamma_3 \cup \Gamma_4$}.\label{F1}
\end{figure}
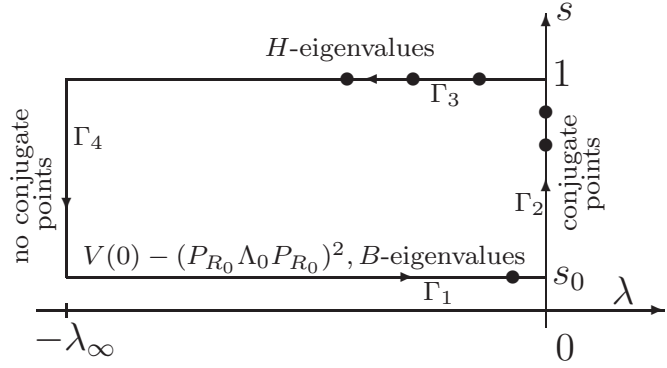 

Due to path additivity, 
\begin{equation*}
\text{Mas} (\ell_1, \ell_2; \Gamma) = 
\text{Mas} (\ell_1, \ell_2; \Gamma_1) 
+ \text{Mas} (\ell_1, \ell_2; \Gamma_2)
+ \text{Mas} (\ell_1, \ell_2; \Gamma_3)
+ \text{Mas} (\ell_1, \ell_2; \Gamma_4),
\end{equation*}  
and by homotopy invariance the Maslov index around
any closed path will be 0, so that 
\begin{equation*}
\text{Mas} (\ell_1, \ell_2; \Gamma) = 0.
\end{equation*}

In order to deal efficiently with our self-adjoint boundary conditions,
we adapt an elegant theorem from \cite{BK} (see also an earlier version in 
\cite{Kuchment2004}).  

\begin{theorem}[Adapted from \cite{BK}] \label{BKtheorem}
Let $\alpha_1$ and $\alpha_2$ be as described in 
(\ref{rank_condition})-(\ref{sac}). Then there exist three orthogonal
(and mutually orthogonal) projection matrices 
$P_D$ (the Dirichlet projection), $P_N$ (the Neumann 
projection), and $P_R = I - P_D - P_N$ (the Robin
projection), and an invertible self-adjoint
operator $\Lambda$ acting on the space $P_R \mathbb{R}^n$
such that the boundary condition 
\begin{equation*}
\alpha_1 y(0) + \alpha_2 y'(0) = 0
\end{equation*} 
can be expressed as 
\begin{equation*}
\begin{aligned}
P_D y(0) &= 0 \\
P_N y'(0) &= 0 \\
P_R y'(0) &= \Lambda P_R y(0).
\end{aligned}
\end{equation*}
Moreover, $P_D$ can be constructed as the projection 
onto the kernel of $\alpha_2$ and $P_N$ can be 
constructed as the projection onto the kernel of 
$\alpha_1$. Construction of the operator $\Lambda$
is discussed in more detail in \cite{BK}, and also 
in \cite{HS}. Precisely
the same statement holds for $\beta_1$ and $\beta_2$
for the boundary condition at $x = 1$.
\end{theorem}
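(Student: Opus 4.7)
The plan is to read the boundary condition $\alpha_1 y(0) + \alpha_2 y'(0) = 0$ as the assertion that $(y(0), y'(0)) \in \mathbb{R}^{2n}$ belongs to the Lagrangian subspace $\ell_\alpha$ with frame $\mathbf{X}_\alpha = {\alpha_2^t \choose -\alpha_1^t}$ (Lagrangian by (\ref{sac})), and then exploit the symplectic structure to peel off the three components. Throughout, write $V_D = \ker \alpha_2$, $V_N = \ker \alpha_1$, and $V_R = (V_D \oplus V_N)^\perp$, so that $P_D$, $P_N$, $P_R$ become the orthogonal projections onto these subspaces.

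First I verify $V_D \perp V_N$, which is what is needed for $P_D$, $P_N$, $P_R$ to be mutually orthogonal. For $u \in V_N$ and $v \in V_D$, both $(u,0)$ and $(0,v)$ lie in $\ell_\alpha$, and the Lagrangian relation gives $\omega((u,0),(0,v)) = (u, v)_{\mathbb{R}^n} = 0$. Next, using the frame parametrization $(y(0), y'(0)) = (\alpha_2^t c, -\alpha_1^t c)$, for any $v \in V_D$ one has $(v, \alpha_2^t c)_{\mathbb{R}^n} = (\alpha_2 v, c)_{\mathbb{R}^n} = 0$, so $\alpha_2^t c \perp V_D$ and hence $P_D y(0) = 0$; symmetrically $P_N y'(0) = 0$. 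Decomposing $y(0) = y_N + y_R$ and $y'(0) = y'_D + y'_R$ and using $\alpha_1 y_N = 0$ and $\alpha_2 y'_D = 0$, the boundary condition collapses to the residual Robin relation $\alpha_1 y_R + \alpha_2 y'_R = 0$ in $V_R \oplus V_R$.

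I then define $\Lambda y_R := y'_R$ via this relation. Uniqueness is immediate: if $\alpha_2(y'_R - \tilde y'_R) = 0$ with $y'_R - \tilde y'_R \in V_R$, then it lies in $V_D \cap V_R = \{0\}$. For existence, I verify that $P_R \alpha_2^t : \mathbb{R}^n \to V_R$ is surjective: since $\mathrm{range}(\alpha_2^t) = V_D^\perp = V_N \oplus V_R$ and $P_R V_N = 0$, we get $P_R \alpha_2^t (\mathbb{R}^n) = V_R$. Given any $y_R \in V_R$, choose $c$ with $P_R \alpha_2^t c = y_R$; the frame element $(\alpha_2^t c, -\alpha_1^t c)$ then has $V_R$-components $(y_R, y'_R)$ satisfying $\alpha_1 y_R + \alpha_2 y'_R = 0$. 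Self-adjointness of $\Lambda$ follows from applying the Lagrangian identity to two elements of $\ell_\alpha$: mutual orthogonality of $V_D, V_N, V_R$ kills all cross terms, leaving $(y_R, \Lambda \tilde y_R)_{\mathbb{R}^n} = (\Lambda y_R, \tilde y_R)_{\mathbb{R}^n}$. Invertibility follows because $\Lambda y_R = 0$ forces $\alpha_1 y_R = 0$, hence $y_R \in V_N \cap V_R = \{0\}$. The construction for $\beta_1, \beta_2$ at $x = 1$ is verbatim.

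The only real obstacle is the existence half of the construction of $\Lambda$, i.e.\ showing that every $y_R \in V_R$ actually arises as the $V_R$-component of some $(y(0), y'(0)) \in \ell_\alpha$; this amounts to showing $P_R \alpha_2^t$ surjects onto $V_R$, which I handle via the range identity $\mathrm{range}(\alpha_2^t) = V_D^\perp$. Alternatively, one can establish the same fact by a dimension count on $V_N \oplus V_D \oplus \{(y_R, y'_R) \in V_R^2 : \alpha_1 y_R + \alpha_2 y'_R = 0\} \to \ell_\alpha$. Every remaining step is routine linear algebra once the Lagrangian structure from Proposition~\ref{Lagrangian_property} is brought to bear.
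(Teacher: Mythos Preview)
The paper does not actually prove this theorem; it is quoted from \cite{BK} (with construction of $\Lambda$ deferred to \cite{BK} and \cite{HS}), so there is no in-paper argument to compare against. Your proof is correct and self-contained: the orthogonality $V_D \perp V_N$, the vanishing $P_D y(0)=0$, $P_N y'(0)=0$, the well-definedness and invertibility of $\Lambda$, and its self-adjointness all follow cleanly from the Lagrangian property as you indicate.

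For context, the construction in \cite{BK} proceeds instead via the unitary matrix $U = -(\alpha_1 + i\alpha_2)^{-1}(\alpha_1 - i\alpha_2)$ (or an equivalent Cayley-type transform), taking $P_D$ and $P_N$ as the spectral projections of $U$ at the eigenvalues $-1$ and $+1$ respectively, and building $\Lambda$ from the restriction of $U$ to the remaining spectral subspace. Your route bypasses this machinery by working directly with the kernels of $\alpha_1,\alpha_2$ and the symplectic form, which is arguably more transparent in this finite-dimensional setting and meshes nicely with the paper's own emphasis on frames and Proposition~\ref{Lagrangian_property}. The cost is that the explicit formula for $\Lambda$ in terms of $\alpha_1,\alpha_2$ is less visible; the benefit is that no spectral decomposition of a unitary is needed.
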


We also take the following from \cite{HS}.

\begin{definition} \label{B_defined}
Let $(P_{D_0}, P_{N_0}, P_{R_0}, \Lambda_0)$ denote the 
projection quadruplet associated with our boundary conditions at $x = 0$,
and let $(P_{D_1}, P_{N_1}, P_{R_1}, \Lambda_1)$ denote the projection 
quadruplet associated with our boundary conditions at $x = 1$.
We denote by $B$ the self-adjoint operator obtained by restricting 
$(P_{R_0} \Lambda_0 P_{R_0} - P_{R_1} \Lambda_1 P_{R_1})$ to 
the space $(\ker P_{D_0}) \cap (\ker P_{D_1})$.
\end{definition}

The main result of \cite{HS} is the following theorem. 

\begin{theorem} \label{hs_main}
For system (\ref{schrodinger_01}), let $V \in C([0, 1])$ be a symmetric matrix 
in $\mathbb{R}^{n \times n}$, and let $\alpha_1$, $\alpha_2$,
$\beta_1$, and $\beta_2$ be as in (\ref{rank_condition})-(\ref{sac}). In addition, 
let $Q$ denote projection onto the kernel of $B$, and make the 
non-degeneracy assumption 
$0 \notin \sigma (Q (V(0)-(P_{R_0} \Lambda_0 P_{R_0})^2) Q)$. Then we have 
\begin{equation*}
\Mor (H) = - \Mas (\ell, \ell_1; \Gamma_2)
+ \Mor (B) + \Mor (Q(V(0) - (P_{R_0} \Lambda_0 P_{R_0})^2)Q).
\end{equation*}  
\end{theorem}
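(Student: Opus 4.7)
The plan is to apply homotopy invariance of the Maslov index around the closed rectangular path $\Gamma$, then use path additivity to split the vanishing total index into four contributions, identify three of them explicitly, and solve for the remaining one.

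Since $\Gamma$ is a closed loop and the associated pair of Lagrangian paths depends continuously on the parameters $(s,\lambda)$ on and inside $\Gamma$, homotopy invariance (P2) gives $\Mas(\ell,\ell_1;\Gamma) = 0$. Path additivity (P1) then yields
\begin{equation*}
\Mas(\Gamma_1) + \Mas(\Gamma_2) + \Mas(\Gamma_3) + \Mas(\Gamma_4) = 0,
\end{equation*}
with the Lagrangian pair suppressed for brevity. Next I would identify each side separately. On $\Gamma_3$ the spatial parameter is fixed at $s=1$ and $\lambda$ decreases from $0$ to $-\lambda_\infty$; intersections $\ell(1,\lambda)\cap\ell_1 \ne \{0\}$ correspond exactly to eigenvalues of $H$. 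The monotonicity computation carried out in Application 1 shows that the eigenvalues of $\tilde{W}(1,\lambda)$ rotate clockwise as $\lambda$ increases, hence counterclockwise as we traverse $\Gamma_3$. Choosing $\lambda_\infty > \|V\|_\infty$ so that all negative eigenvalues of $H$ lie in $(-\lambda_\infty,0]$ and arranging endpoints to avoid crossings, each such eigenvalue contributes $+1$, yielding $\Mas(\Gamma_3) = \Mor(H)$. On $\Gamma_1$, with $s=s_0$ taken small, the evolved Lagrangian $\ell(s_0,\lambda)$ stays close to the initial frame $\ell(0,\lambda)=\mathbf{X}_1(0,\lambda)$, which under the non-degeneracy hypothesis has no intersection with $\ell_1$ uniformly in $\lambda \in [-\lambda_\infty,0]$; hence $\Mas(\Gamma_1) = 0$.

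The remaining side $\Gamma_4$ (fixed $\lambda = -\lambda_\infty$, $s$ decreasing from $1$ to $s_0$) is where the two correction Morse indices arise. Large-$|\lambda|$ asymptotics for the first-order system (\ref{system01}), combined with Theorem \ref{BKtheorem} to decompose the boundary data into Dirichlet, Neumann, and Robin sectors, allow one to identify the $\Gamma_4$-crossings with two independent mechanisms: (i) a mismatch between the Robin parts of the boundary conditions at $x=0$ and $x=1$, which contributes $-\Mor(B)$ via the operator $B$ of Definition \ref{B_defined}; and (ii) the behavior of the potential near $x=0$ on the Robin subspace (for the small interval $[0,s_0]$), which contributes $-\Mor(Q(V(0)-(P_{R_0}\Lambda_0 P_{R_0})^2)Q)$. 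The non-degeneracy hypothesis $0 \notin \sigma(Q(V(0)-(P_{R_0}\Lambda_0 P_{R_0})^2)Q)$ ensures these two sources of crossings are separated cleanly (no crossing exactly at $\lambda = -\lambda_\infty$ for some $s$). Substituting the four evaluations into the vanishing sum and solving for $\Mor(H)$ recovers the claimed identity with $-\Mas(\ell,\ell_1;\Gamma_2)$ on the right.

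The principal obstacle is the asymptotic analysis on $\Gamma_4$: one must simultaneously send $s_0 \to 0^+$ and $\lambda_\infty \to +\infty$ and track precisely which crossings persist in the limit and with what direction. Separating the boundary-data contribution $B$ from the potential-at-zero contribution requires a two-scale argument in which the Dirichlet sector decouples for large $|\lambda|$, the Neumann sector contributes the $B$-part, and the Robin sector contributes the $V(0)$-part after a rescaling that compares $-\lambda_\infty \, I$ with $V(0) - (P_{R_0}\Lambda_0 P_{R_0})^2$ on the complement of $\ker B$. The remaining verifications---monotonicity along $\Gamma_3$ and absence of crossings along $\Gamma_1$---are essentially routine given the preparatory material developed in Sections \ref{framework_section}--\ref{monotonicity_section}.
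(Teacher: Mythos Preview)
Your overall strategy---homotopy invariance plus path additivity around the closed box, then identify the four sides---is the right one, and your treatment of $\Gamma_3$ is correct. However, you have swapped the roles of $\Gamma_1$ and $\Gamma_4$, and the arguments you give for each do not hold. On the left shelf $\Gamma_4$ (fixed $\lambda=-\lambda_\infty$, $s$ varying in $[s_0,1]$) there are \emph{no} crossings: the rescaled problem (\ref{schrodinger_01u}) together with an energy estimate gives a lower bound $\lambda(s)\ge -c/s-\|V\|_\infty$, so once $s_0>0$ is fixed one can choose $\lambda_\infty$ large enough that no spectral curve reaches $\lambda=-\lambda_\infty$ for $s\in[s_0,1]$. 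Your proposed ``large-$|\lambda|$ asymptotics'' mechanism on $\Gamma_4$ therefore cannot produce the correction terms.

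Conversely, your argument that $\Mas(\Gamma_1)=0$ fails on two counts. First, the initial Lagrangian $\ell(0)$ (with frame ${\alpha_2^t \choose -\alpha_1^t}$) generically \emph{does} intersect the target $\ell_1$ (with frame ${\beta_2^t \choose -\beta_1^t}$); the dimension of that intersection is exactly $\dim\big((\ker P_{D_0})\cap(\ker P_{D_1})\big)$, the space on which $B$ acts, and the non-degeneracy hypothesis on $Q(V(0)-(P_{R_0}\Lambda_0 P_{R_0})^2)Q$ says nothing about this. Second, ``$\ell(s_0,\lambda)$ stays close to $\ell(0,\lambda)$'' cannot be uniform in $\lambda\in[-\lambda_\infty,0]$, since the evolution over $[0,s_0]$ is governed by $V-\lambda I$ and becomes arbitrarily strong as $\lambda\to-\infty$. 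The correction terms in fact arise on $\Gamma_1$: after rescaling to $[0,1]$ one sets $\tilde\lambda=s^2\lambda$ and expands $\tilde\lambda(s)=\tilde\lambda_0+\tilde\lambda_1 s+\tilde\lambda_2 s^2+\cdots$. The zeroth order forces $\tilde\lambda_0\ge 0$; when $\tilde\lambda_0=0$ the first-order coefficient $\tilde\lambda_1$ is an eigenvalue of $B$ (negative eigenvalues give curves $\lambda(s)\sim\tilde\lambda_1/s$ entering through the bottom shelf, accounting for $\Mor(B)$); and when also $\tilde\lambda_1=0$ the second-order coefficient $\tilde\lambda_2$ is an eigenvalue of $Q(V(0)-(P_{R_0}\Lambda_0 P_{R_0})^2)Q$, accounting for the remaining term. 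The non-degeneracy assumption is precisely what terminates this expansion at second order.
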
  

In order to clarify the nature of the terms 
$\Mor (B) + \Mor (Q(V(0) - (P_{R_0} \Lambda_0 P_{R_0})^2)Q)$, we show here
how they easily arise from a naive perturbation argument; for a rigorous
treatment, the reader is referred to \cite{HS}. 

First, we observe that a crossing at a point $(s, \lambda)$ corresponds 
with a solution to the system 
\begin{equation} \label{schrodinger_01s}
\begin{aligned}
-y'' + V(x) y &= \lambda y \\
\alpha_1 y(0) + \alpha_2 y'(0) &= 0 \\
\beta_1 y(s) + \beta_2 y'(s) &= 0.
\end{aligned}
\end{equation} 
Setting $\xi = x/s$ and $u(\xi) = y(x)$, we obtain the system 
\begin{equation} \label{schrodinger_01u}
\begin{aligned}
H(s) u := -u'' + s^2 V(s \xi) y &= s^2 \lambda u \\
\alpha_1 u(0) + \frac{1}{s} \alpha_2 u'(0) &= 0 \\
\beta_1 u(1) + \frac{1}{s}\beta_2 u'(1) &= 0.
\end{aligned}
\end{equation} 
Employing a straightforward energy estimate similar to the proof 
of Lemma 3.12 in \cite{HS}, we find that there exists a constant
$c$ so that any eigenvalue of (\ref{schrodinger_01s}) satisfies
\begin{equation*}
\lambda (s) \ge - \frac{c}{s} - \|V\|_{L^{\infty} (0,1)}.
\end{equation*}
This means that by taking $\lambda_{\infty}$ sufficiently large
we can ensure that there are no crossings along the left shelf. In 
order to understand crossings along the bottom shelf we set 
$\tilde{\lambda} = s^2 \lambda (s)$ and take the 
naive expansions 
\begin{equation} \label{naive}
\begin{aligned}
\tilde{\lambda} (s) &= \tilde{\lambda}_0 + \tilde{\lambda}_1 s + \tilde{\lambda}_2 s^2 + \cdots \\
\phi(\xi; s) &= \phi_0 (\xi) + \phi_1 (\xi) s + \phi_2 (\xi) s^2 + \cdots,
\end{aligned} 
\end{equation} 
where $\phi(\xi; s)$ is an eigenfunction corresponding with eigenvalue 
$\tilde{\lambda} (s)$. We emphasize that the spectral curves we are looking for 
will have the corresponding form 
\begin{equation}
\lambda(s) = \frac{\tilde{\lambda}_0}{s^2} + \frac{\tilde{\lambda}_1}{s} + \tilde{\lambda}_2 + \dots.
\end{equation} 

Using Theorem \ref{BKtheorem}, we can express the boundary conditions 
for (\ref{schrodinger_01u}) as 
\begin{alignat*}{2}
P_{D_0} u (0) &= 0; & \qquad P_{D_1} u (1) &= 0; \\
P_{N_0} u'(0) &= 0; & \qquad  P_{N_1} u'(1) &= 0; \\
P_{R_0} u'(0) &= s \Lambda_0 P_{R_0} u(0); & \qquad P_{R_1} u'(1) &= s \Lambda_1 P_{R_1} u(1).
\end{alignat*} 
Upon substitution of (\ref{naive}) into (\ref{schrodinger_01u})
with projection boundary conditions, we find that the zeroth order 
equation is $-\phi_0'' = \tilde{\lambda}_0 \phi_0$ with boundary conditions
\begin{alignat*}{2}
P_{D_0} \phi_0 (0) &= 0; & \qquad P_{D_1} \phi_0 (1) &= 0; \\
P_{N_0} \phi_0'(0) &= 0; & \qquad  P_{N_1} \phi_0'(1) &= 0; \\
P_{R_0} \phi_0'(0) &= 0; & \qquad P_{R_1} \phi_0'(1) &= 0.
\end{alignat*}  
Taking an $L^2 (0,1)$ inner product of this equation with $\phi_0$
we obtain 
\begin{equation*}
\begin{aligned}
\tilde{\lambda}_0 \|\phi_0\|_{L^2 (0,1)}^2 &= \langle \phi_0'', \phi_0 \rangle \\
&= \|\phi_0'\|_{L^2 (0,1)}^2 - (\phi_0' (1), \phi_0 (1))_{\mathbb{R}^n}
+ (\phi_0' (0), \phi_0 (0))_{\mathbb{R}^n}.
\end{aligned}
\end{equation*}
Observing that 
\begin{equation} \label{working_with_projections}
\begin{aligned}
(\phi_0' (1), \phi_0 (1))_{\mathbb{R}^n} &=
(\phi_0' (1), P_{D_1} \phi_0 (1) + P_{N_1} \phi_0 (1) + P_{R_1} \phi_0 (1))_{\mathbb{R}^n} \\
&= (P_{N_1} \phi_0' (1) + P_{R_1} \phi_0' (1),  \phi_0 (1))_{\mathbb{R}^n} = 0,
\end{aligned}
\end{equation}
and noting that similarly $(\phi_0' (0), \phi_0 (0))_{\mathbb{R}^n} = 0$,
we see that 
\begin{equation*}
\tilde{\lambda}_0 \|\phi_0\|_{L^2 (0,1)}^2 = \|\phi_0'\|_{L^2 (0,1)}^2.
\end{equation*}
Clearly, we must have $\tilde{\lambda}_0 \ge 0$, and if $\tilde{\lambda}_0 > 0$ the 
associated spectral curve will lie in the right quarter-plane and will
not cross into the Maslov Box. On the other hand, if $\tilde{\lambda}_0 = 0$
then $\|\phi_0'\|_{L^2 (0,1)} = 0$ and $\phi_0$ will be a constant 
function. In this case, the only requirement on the constant vector 
$\phi_0$ is (from the projection boundary conditions)
\begin{equation*}
\phi_0 \in (\ker P_{D_0}) \cap (\ker P_{D_1}).
\end{equation*} 

Let $P$ denote the orthogonal projection onto the 
space $(\ker P_{D_0}) \cap (\ker P_{D_1})$ and set 
\begin{equation*}
B = P (P_{R_0} \Lambda_0 P_{R_0} - P_{R_1} \Lambda_1 P_{R_1}) P
\end{equation*}
(i.e., $B$ is the matrix defined in (\ref{B_defined})). Since $B$ is 
symmetric and maps $(\ker P_{D_0}) \cap (\ker P_{D_1})$
to itself, we can create an orthonormal basis for 
$(\ker P_{D_0}) \cap (\ker P_{D_1})$ from the 
eigenvectors of $B$. Moreover, let $Q$ denote the orthogonal projection
onto $\ker B$ (as in the statement of Theorem \ref{hs_main}) and 
create an orthonormal basis for $\ker B$ from the eigenvectors of 
$Q (V(0) - (P_{R_0} \Lambda_0 P_{R_0})^2) Q$. 

Now, we are ready for the order 1 equation, assuming already that
$\tilde{\lambda}_0 = 0$. For any $\phi_0$ selected from our chosen basis 
for $(\ker P_{D_0}) \cap (\ker P_{D_1})$, 
we obtain the equation $-\phi_1'' = \tilde{\lambda}_1 \phi_0$,
with projection boundary conditions
\begin{alignat}{2} \label{phi1bc}
P_{D_0} \phi_1 (0) &= 0; & \qquad P_{D_1} \phi_1 (1) &= 0; \notag \\
P_{N_0} \phi_1'(0) &= 0; & \qquad  P_{N_1} \phi_1'(1) &= 0; \\
P_{R_0} \phi_1'(0) &= \Lambda_0 P_{R_0} \phi_0; & \qquad P_{R_1} \phi_1'(1) &= \Lambda_1 P_{R_1} \phi_0.
\end{alignat}   
Upon taking an $L^2 (0,1)$ inner product with $\phi_0$, we find 
\begin{equation*}
\begin{aligned}
\tilde{\lambda}_1 |\phi_0|_{\mathbb{R}^n}^2 
&= - \langle \phi_1'', \phi_0 \rangle \\
&= \Big((P_{R_0} \Lambda_0 P_{R_0} - P_{R_1} \Lambda_1 P_{R_1}) \phi_0, \phi_0 \Big)_{\mathbb{R}^n}
= \Big(B \phi_0, \phi_0 \Big)_{\mathbb{R}^n}, 
\end{aligned}
\end{equation*}
using a calculation similar to (\ref{working_with_projections}). Since $\phi_0$ is an 
eigenvector for $B$, $\tilde{\lambda}_1$ will be an eigenvalue of $B$. If $\tilde{\lambda}_1 > 0$
this eigenvalue will be in the right half-plane for $s$ small and so won't cross into 
the Maslov Box. On the other hand, if $\tilde{\lambda}_1 < 0$ we will obtain a spectral curve
with the asymptotic form $\lambda (s) \sim \frac{\tilde{\lambda}_1}{s}$, and (for $\lambda_{\infty}$
chosen sufficiently large) this will enter the Maslov Box through the bottom shelf. These crossings
are precisely counted by the term $\Mor (B)$ in Theorem \ref{hs_main}. 

Finally, if $\tilde{\lambda}_1 = 0$ we need to proceed with the next order of our perturbation argument. 
For this step, we note that we have $\tilde{\lambda}_0 = 0$ and $\tilde{\lambda}_1 = 0$, 
and that we now restrict to $\phi_0 \in \ker B$. Our second order perturbation equation is 
$-\phi_2'' + V(0) \phi_0 = \tilde{\lambda}_2 \phi_0$ subject to the conditions 
\begin{alignat*}{2}
P_{D_0} \phi_2 (0) &= 0; & \qquad P_{D_1} \phi_2 (1) &= 0; \\
P_{N_0} \phi_2'(0) &= 0; & \qquad  P_{N_1} \phi_2'(1) &= 0; \\
P_{R_0} \phi_2'(0) &= \Lambda_0 P_{R_0} \phi_1 (0); & \qquad P_{R_1} \phi_2'(1) &= \Lambda_1 P_{R_1} \phi_1 (1).
\end{alignat*}   
We take an $L^2 (0,1)$ inner product of this equation with $\phi_0$ and compute
\begin{equation*}
\begin{aligned}
\tilde{\lambda}_2 |\phi_0|_{\mathbb{R}^{n}}^2 - (V(0) \phi_0, \phi_0)_{\mathbb{R}^{n}} 
&= - \langle \phi_2'', \phi_0 \rangle 
= - (\phi_2'(1), \phi_0)_{\mathbb{R}^{n}} + (\phi_2'(0), \phi_0)_{\mathbb{R}^{n}} \\
&= (P_{R_0} \Lambda_0 P_{R_0} \phi_1 (0) - P_{R_1} \Lambda_1 P_{R_1} \phi_1 (1), \phi_0)_{\mathbb{R}^{n}}. 
\end{aligned}
\end{equation*}
In order to understand this last inner product, we note that for $\tilde{\lambda}_1 = 0$
we have $\phi_1'' = 0$ with boundary conditions (\ref{phi1bc}). We can write 
$\phi_1 (x) = ax+b$ for constant vectors $a, b \in \mathbb{R}^n$, and the 
conditions $P_{R_0} \phi_1'(0) = \Lambda_0 P_{R_0} \phi_0$ and 
$P_{R_1} \phi_1'(1) = \Lambda_1 P_{R_1} \phi_0$ imply 
$P_{R_0} a = P_{R_0} \Lambda_0 P_{R_0} \phi_0$ and likewise 
$P_{R_1} a = P_{R_1} \Lambda_1 P_{R_1} \phi_0$. Noting also that 
$\phi_1 (1) - \phi_1 (0) = a$, we compute 
\begin{equation*}
\begin{aligned}
(P_{R_0} &\Lambda_0 P_{R_0} \phi_1 (0) - P_{R_1} \Lambda_1 P_{R_1} \phi_1 (1), \phi_0)_{\mathbb{R}^{n}}
= 
(\phi_1 (0), P_{R_0} \Lambda_0 P_{R_0} \phi_0)_{\mathbb{R}^{n}} - (\phi_1 (1), P_{R_1} \Lambda_1 P_{R_1} \phi_0)_{\mathbb{R}^{n}} \\
&= (\phi_1 (0) - \phi_1 (1),P_{R_0} \Lambda_0 P_{R_0} \phi_0 )_{\mathbb{R}^{n}} 
= - (a, P_{R_0} \Lambda_0 P_{R_0} \phi_0 )_{\mathbb{R}^{n}} \\
&= - (P_{R_0} a, P_{R_0} \Lambda_0 P_{R_0} \phi_0 )_{\mathbb{R}^{n}}
= - (P_{R_0} \Lambda_0 P_{R_0} \phi_0, P_{R_0} \Lambda_0 P_{R_0} \phi_0 )_{\mathbb{R}^{n}} \\
&=-((P_{R_0} \Lambda_0 P_{R_0})^2 \phi_0, \phi_0 )_{\mathbb{R}^{n}}.
\end{aligned}
\end{equation*}
We see that 
\begin{equation*}
\tilde{\lambda}_2 |\phi_0|_{\mathbb{R}^{n}}^2 = \Big((V(0) - (P_{R_0} \Lambda_0 P_{R_0})^2) \phi_0, \phi_0 \Big)_{\mathbb{R}^{n}}.
\end{equation*}
Recalling that we have selected the vectors $\phi_0$ to be orthonormal eigenvectors for the matrix
$Q (V(0) - (P_{R_0} \Lambda_0 P_{R_0})^2) Q$, we see that we have a spectral curve entering
the Maslov Box if and only if $\tilde{\lambda}_2$ is a negative eigenvalue of this matrix. 

In principle, if $\tilde{\lambda}_2 = 0$ we can proceed to the next step in the perturbation argument, 
but this is the case that we have eliminated by our non-degeneracy assumption.

\smallskip
\noindent
{\bf Application 2.} In \cite{HLS}, the authors consider Schr\"odinger equations
on $\mathbb{R}$,
\begin{equation} \label{main_hls}
\begin{aligned}
Hy &:= -y'' + V(x) y = \lambda y, \\
\dom (H) &= H^1 (\mathbb{R}),
\end{aligned}
\end{equation}
where $y \in \mathbb{R}^n$ and $V \in C(\mathbb{R})$ is a symmetric matrix
satisfying the following asymptotic conditions:

\medskip
\noindent
{\bf (A1)} The limits $\lim_{x \to \pm \infty} V(x) = V_{\pm}$ exist, and 
for all $M \in \mathbb{R}$, 
\begin{equation*}
\int_{-M}^{\infty} (1 + |x|) |V(x) - V_+| dx < \infty; 
\quad
\int_{-\infty}^M (1+|x|) |V(x) - V_-| dx < \infty.
\end{equation*} 

\medskip
\noindent
{\bf (A2)} The eigenvalues of $V_{\pm}$ are all non-negative.

As verified in \cite{HLS}, if $\lambda < 0$ then (\ref{main_hls}) will have $n$ linearly independent solutions 
that decay as $x \to -\infty$ and $n$ linearly independent solutions
that decay as $x \to +\infty$. We express these respectively as 
\begin{equation*}
\begin{aligned}
\phi_{n+j}^- (x; \lambda) &= e^{\mu_{n+j}^- (\lambda) x} (r_j^- + \mathcal{E}_j^- (x;\lambda)) \\
\phi_j^+ (x; \lambda) &= e^{\mu_j^+ (\lambda) x} (r_{n+1-j}^+ + \mathcal{E}_j^+ (x;\lambda)),
\end{aligned}
\end{equation*}
with also
\begin{equation*}
\begin{aligned}
\partial_x \phi_{n+j}^- (x; \lambda) &= e^{\mu_{n+j}^- (\lambda) x} (\mu_{n+j}^- r_j^- + \tilde{\mathcal{E}}_j^- (x;\lambda)) \\
\partial_x \phi_j^+ (x; \lambda) &= e^{\mu_j^+ (\lambda) x} (\mu_j^+ r_{n+1-j}^+ + \tilde{\mathcal{E}}_j^+ (x;\lambda)),
\end{aligned}
\end{equation*}
for $j = 1,2,\dots,n$, where the nature of the $\mu_j^{\pm}$, $r_j^{\pm}$, and 
$\mathcal{E}_j^{\pm} (x; \lambda), \tilde{\mathcal{E}}_j^{\pm} (x; \lambda)$ 
are developed in \cite{HLS}, but won't be necessary for this brief discussion, except 
for the observation that under assumptions (A1) and (A2)
\begin{equation} \label{limits}
\lim_{x \to \pm \infty} \mathcal{E}_j^{\pm} (x;\lambda) = 0; 
\quad 
\lim_{x \to \pm \infty} \tilde{\mathcal{E}}_j^{\pm} (x;\lambda) = 0.
\end{equation}

If we create a frame
$\mathbf{X}^- (x; \lambda) = {X^- (x; \lambda) \choose Y^- (x; \lambda)}$ 
by taking $\{\phi_{n+j}^-\}_{j=1}^n$ as the columns of $X^-$ and 
$\{{\phi_{n+j}^-}'\}_{j=1}^n$ as the respective columns of $Y^-$ then 
it is straightforward to verify that $\mathbf{X}^-$ is a frame for a Lagrangian 
subspace, which we will denote $\ell^-$ (see \cite{HLS}). Likewise, we can create a frame
$\mathbf{X}^+ (x; \lambda) = {X^+ (x; \lambda) \choose Y^+ (x; \lambda)}$ 
by taking $\{\phi_j^+\}_{j=1}^n$ as the columns of $X^+$ and 
$\{{\phi_j^+}'\}_{j=1}^n$ as the respective columns of $Y^+$. Then 
$\mathbf{X}^+$ is a frame for a Lagrangian subspace, which we will 
denote $\ell^+$.

In either case, we can view the exponential multipliers $e^{\mu_j^{\pm} x}$
as expansion coefficients, and if we drop these off we retain frames
for the same spaces. That is, we can create an alternative frame for 
$\ell^-$ by taking the expressions $r_j^- + \mathcal{E}_j^- (x;\lambda)$
as the columns of $X^-$ and the expressions 
$\mu_{n+j}^- r_j^- + \tilde{\mathcal{E}}_j^- (x;\lambda)$ as the 
corresponding columns for $Y^-$. Using (\ref{limits}) we see that 
in the limit as $x$ tends to $-\infty$ we obtain the frame 
$\mathbf{R}^- (\lambda) = {R^- \choose S^- (\lambda)}$, where 
\begin{equation*}
\begin{aligned}
R^- &= 
\begin{pmatrix}
r_1^- & r_2^- & \dots & r_n^-
\end{pmatrix} \\
S^- (\lambda) &= 
\begin{pmatrix}
\mu_{n+1}^- r_1^- & \mu_{n+2}^- r_2^- & \dots & \mu_{2n}^- r_n^-
\end{pmatrix}. 
\end{aligned}
\end{equation*}
As discussed in \cite{HLS}, $\mathbf{R}^-$ is the 
frame for a Lagrangian subspace, which we will denote $\ell^-_{\infty}$.  
Proceeding similarly with $\ell^+$, we obtain the asymptotic Lagrangian
subspace $\ell^+_{\infty}$ with frame 
$\mathbf{R}^+ (\lambda) = {R^+ \choose S^+ (\lambda)}$,
where 
\begin{equation} \label{RplusSplus}
\begin{aligned}
R^+ &= 
\begin{pmatrix} 
r_n^+ & r_{n-1}^+ & \dots & r_1^+
\end{pmatrix} \\
S^+ (\lambda) &= 
\begin{pmatrix} 
\mu_1^+ r_n^+ & \mu_2^+ r_{n-1}^+ & \dots & \mu_n^+ r_1^+
\end{pmatrix}. 
\end{aligned}
\end{equation}

We can now construct $\tilde{W} (x,\lambda)$ in this case as 
\begin{equation} \label{tildeWmain}
\tilde{W} (x; \lambda) 
= - (X^- (x; \lambda) + i Y^- (x; \lambda)) (X^- (x; \lambda) - i Y^- (x; \lambda))^{-1} 
(R^+ - i S^+ (\lambda)) (R^+ + i S^+ (\lambda))^{-1}. 
\end{equation} 
We will be interested in a closed path in the 
$x$-$\lambda$ plane, determined by a sufficiently large 
value $\lambda_{\infty}$. First, if we fix $\lambda = 0$ and 
let $x$ run from $-\infty$ to $+\infty$, we denote the 
resulting path $\Gamma_0$ (the {\it right shelf}). Next, 
we let $\Gamma_+$ denote a path in which $\lambda$ 
decreases from $0$ to $-\lambda_{\infty}$. (We can view 
this as a path corresponding with the limit $x \to +\infty$, 
but the limiting behavior will be captured by the nature of the 
Lagrangian subspaces; we refer to this path as the {\it top
shelf}.) Continuing counterclockwise along our path, 
we denote by $\Gamma_{\infty}$ the path obtained by fixing 
$\lambda = -\lambda_{\infty}$ and letting $x$ run from $+\infty$
to $-\infty$ (the {\it left shelf}). Finally, we close the path 
in an asysmptotic sense by taking a final path, $\Gamma_-$, 
with $\lambda$ running from $-\lambda_{\infty}$ to $0$ 
(viewed as the asymptotic limit as $x \to + \infty$; we refer 
to this as the {\it bottom shelf}).  

The principal result of \cite{HLS} is as follows.

\begin{theorem} \label{hls_main_theorem}
Let $V \in C(\mathbb{R})$ be a symmetric real-valued matrix, and suppose (A1)
and (A2) hold. Then 
\begin{equation*}
\Mor(H) = - \Mas(\ell^-, \ell^+_{\infty}; \Gamma_0).
\end{equation*}
\end{theorem}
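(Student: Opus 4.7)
The argument is a Maslov box computation. Form the closed rectangular path $\Gamma=\Gamma_0\cup\Gamma_+\cup\Gamma_\infty\cup\Gamma_-$ defined immediately before the theorem; since $\Gamma$ is contractible in parameter space, homotopy invariance (P2) gives $\Mas(\ell^-,\ell^+_\infty;\Gamma)=0$, and path additivity (P1) then yields
\[
\Mas(\Gamma_0)+\Mas(\Gamma_+)+\Mas(\Gamma_\infty)+\Mas(\Gamma_-)=0.
\]
The plan is to establish $\Mas(\Gamma_\infty)=\Mas(\Gamma_-)=0$ and $\Mas(\Gamma_+)=\Mor(H)$, which immediately yields $\Mor(H)=-\Mas(\ell^-,\ell^+_\infty;\Gamma_0)$.

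For the left shelf $\Gamma_\infty$ (fixed $\lambda=-\lambda_\infty$, $x$ varying from $+\infty$ to $-\infty$), I would choose $\lambda_\infty$ large enough that $-\lambda_\infty<\inf\sigma(H)$---possible since (A2) bounds $H$ below---and use an $L^2$ energy estimate to exclude, for every $x$, any nontrivial solution of $-y''+V(x)y=-\lambda_\infty y$ lying simultaneously in $\ell^-(x,-\lambda_\infty)$ and $\ell^+_\infty(-\lambda_\infty)$. For the bottom shelf $\Gamma_-$ (the asymptotic limit $x\to-\infty$), $\ell^-(x,\lambda)$ tends to $\ell^-_\infty(\lambda)$ with the explicit frame $\mathbf{R}^-(\lambda)$, and one verifies directly that $\ell^-_\infty(\lambda)\cap\ell^+_\infty(\lambda)=\{0\}$ for $\lambda\in[-\lambda_\infty,0)$, using that the multipliers $\mu_{n+j}^-(\lambda)>0$ and $\mu_j^+(\lambda)<0$ force the frames $\mathbf{R}^\pm(\lambda)$ to span transverse Lagrangian subspaces.

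The substantive content is on $\Gamma_+$, where $x\to+\infty$ and $\lambda$ decreases from $0$ to $-\lambda_\infty$. Two tasks are required: (i) identify crossings along $\Gamma_+$ bijectively, with multiplicity, with $L^2$-eigenvalues of $H$; and (ii) show that every such crossing has the same rotational direction. For (i), after dividing through by the dominant exponentials $e^{\mu_j^+x}$ as in the construction of $\ell^\pm_\infty$, a nontrivial vector in the asymptotic intersection $\ell^-(+\infty,\lambda)\cap\ell^+_\infty(\lambda)$ corresponds to a combination of the decaying modes $\phi_{n+j}^-$ at $-\infty$ whose $x\to+\infty$ tail also lies in the span of the decaying modes $\phi_j^+$, i.e., to a genuine element of $\ker(H-\lambda)\subset L^2(\mathbb{R};\mathbb{R}^n)$, and conversely. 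For (ii), I apply the monotonicity framework of Section~\ref{monotonicity_section} in the $\lambda$ direction: the Picone-type identity
\[
\frac{d}{dx}\bigl((X^-)^t\partial_\lambda Y^- - (Y^-)^t\partial_\lambda X^-\bigr)=-(X^-)^t X^-,
\]
combined with $\partial_\lambda X^-,\partial_\lambda Y^-\to 0$ as $x\to-\infty$ (from the exponential decay of $\mathbf{X}^-$ at $-\infty$), shows the matrix $(X^-)^t\partial_\lambda Y^--(Y^-)^t\partial_\lambda X^-$ is negative definite; an analogous analysis for the $\mathbf{R}^+(\lambda)$ contribution yields the same sign, so $\tilde{\Omega}(\lambda)$ is negative definite on $\Gamma_+$. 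Hence eigenvalues of $\tilde W$ rotate clockwise as $\lambda$ increases, equivalently counterclockwise along $\Gamma_+$ (where $\lambda$ decreases), and each eigenvalue of $H$ in $(-\lambda_\infty,0)$ contributes exactly $+1$, giving $\Mas(\Gamma_+)=\Mor(H)$.

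The principal obstacle is (i): making rigorous the asymptotic intersection $\ell^-(+\infty,\lambda)\cap\ell^+_\infty(\lambda)$ when the columns of $\mathbf{X}^-(x,\lambda)$ themselves generically contain both exponentially growing and decaying components at $+\infty$. This requires a careful decomposition of each $\phi_{n+j}^-(x,\lambda)$ in the $\{\phi_j^+(x,\lambda)\}$ basis for $x$ large, leveraging the remainder bounds $\mathcal{E}_j^\pm,\tilde{\mathcal{E}}_j^\pm$ from \cite{HLS} to extract a finite-dimensional correspondence between crossings and $L^2$-eigenvalue multiplicities. A secondary technical point is the corner $\lambda=0$, where the multipliers $\mu_j^\pm$ may degenerate (permitted by the non-strictness of (A2)); this is handled by a limiting argument with $\lambda\uparrow 0$ from below, relying on continuity of the Maslov index under uniform convergence of Lagrangian paths.
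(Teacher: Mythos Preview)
The paper does not actually prove this theorem: it is stated verbatim as ``the principal result of \cite{HLS}'' and the argument is deferred entirely to that reference. What the present paper does provide is the set-up---the construction of $\ell^-$, $\ell^+_\infty$, the asymptotic frames $\mathbf{R}^{\pm}$, and the rectangular contour $\Gamma=\Gamma_0\cup\Gamma_+\cup\Gamma_\infty\cup\Gamma_-$---together with the general monotonicity machinery of Section~\ref{monotonicity_section}. Your outline is precisely the Maslov-box argument that this scaffolding is designed for, and it is the approach carried out in \cite{HLS}: vanishing contributions on the left and bottom shelves, identification of $\Gamma_+$-crossings with $L^2$-eigenvalues, and $\lambda$-monotonicity via the Picone identity to fix the sign. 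So your strategy matches the intended one.

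Two small corrections and one caution. First, the paper (almost certainly by a typo) describes $\Gamma_-$ as the limit $x\to+\infty$; your reading of $\Gamma_-$ as the $x\to-\infty$ shelf is the correct one, and your transversality argument for $\ell^-_\infty(\lambda)\cap\ell^+_\infty(\lambda)=\{0\}$ is the right mechanism there. Second, for the sign on $\Gamma_+$: eigenvalues of $\tilde W$ rotate clockwise as $\lambda$ increases, hence counterclockwise as $\lambda$ decreases along $\Gamma_+$; with the convention in Definition~\ref{dfnDef3.6} (counterclockwise arrivals count $+1$), this indeed gives $\Mas(\Gamma_+)=\Mor(H)$, but you should be explicit about endpoint conventions at $\lambda=0$ and $\lambda=-\lambda_\infty$ so that no crossing is double-counted or lost. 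Finally, the issue you flag as the ``principal obstacle''---making sense of $\ell^-(+\infty,\lambda)$ when the columns of $\mathbf{X}^-$ contain exponentially growing components---is real and is exactly where \cite{HLS} does the technical work; your proposed resolution (expand $\phi_{n+j}^-$ in the $\{\phi_j^+\}$ basis and use the $\mathcal{E}^\pm$, $\tilde{\mathcal{E}}^\pm$ remainder estimates) is the right idea, but note that when $\lambda$ is an eigenvalue the decaying components of $\mathbf{X}^-$ are dominated in norm by the growing ones, so one must argue at the level of the Lagrangian subspace (equivalently, of $\tilde W$) rather than of the raw frame to see the intersection persist in the $x\to+\infty$ limit.
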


\begin{remark} As discussed in Section 5 of \cite{HLS}, Theorem 
\ref{hls_main_theorem} can be extended to the case 
\begin{equation} \label{Es}
H_s y := - y'' + s y' + V(x) y = \lambda y,
\end{equation}
for any $s \in \mathbb{R}$. This observation---for which the
authors are indebted to \cite{BJ1995}---allows the application 
of these methods in the study of spectral stability for traveling
wave solutions in Allen-Cahn equations. 
\end{remark}

\section*{Appendix}

In this brief appendix, we verify (P2) (homotopy invariance)
for our definition of the Maslov index. We assume 
$\mathcal{L} (s,t) = (\ell_1 (s,t), \ell_2 (s,t))$
is continuous on a cartesian product of closed, bounded 
intervals $I \times J = [0, 1] \times [a, b]$, 
and that $\mathcal{L} (s, a) = \mathcal{L}_a$ 
for all $s \in I$ and likewise $\mathcal{L} (s,b) = \mathcal{L}_b$
for all $s \in I$, for some fixed 
$\mathcal{L}_a, \mathcal{L}_b \in \Lambda (n) \times \Lambda (n)$.
We denote by $\tilde{W} (s,t)$ the matrix (\ref{tildeW}) 
associated with $\mathcal{L} (s, t)$. It's straightforward to 
see from our metric (\ref{rho_metric}) that continuity of 
$\mathcal{L}$ implies continuity of the associated frame
$\mathbf{X} (s,t)$, which in turn (and along with non-degeneracy) 
implies continuity of $\tilde{W} (s, t)$. We know from 
Theorem II.5.1 in \cite{Kato} that the eigenvalues of 
$\tilde{W} (s,t)$ must vary continuously with $s$ and $t$. 
Moreover, we see from Theorem II.5.2 in the same reference
that these eigenvalues can be tracked as $n$ continuous 
paths $\{\mu^k (s,t)\}_{k=1}^n$, which in our case will 
be restricted to $S^1$. 

For notational convenience, let's fix $s_1, s_2 \in I$ suitably
close together (in a manner that we make precise below) and 
set $\tilde{W}_1 (t) := \tilde{W} (s_1, t)$ and 
$\tilde{W}_2 (t) := \tilde{W} (s_2, t)$.

\begin{claim} Suppose $\mu (t)$ and $\nu (t)$ are any two 
continuous eigenvalue paths of $\tilde{W}_1 (t)$ and 
$\tilde{W}_2 (t)$ respectively, with $\mu (a) = \nu (a)$
and $\mu (b) = \nu (b)$. Then there exists $\epsilon > 0$
sufficiently small so that if 
\begin{equation*}
\max_{t \in J} |\mu (t) - \nu (t)| < \epsilon
\end{equation*}  
then the spectral flow of $\mu (t)$ is the same as the
spectral flow of $\nu (t)$.
\end{claim}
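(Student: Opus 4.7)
The plan is to lift both eigenvalue paths to the universal cover $\mathbb{R}$ of $S^1$ via $p(\theta) = e^{i\theta}$ and show that the spectral flow of a single continuous eigenvalue path through $-1$ depends only on the endpoint values of its lift. Combined with the observation that matching endpoints for $\mu$ and $\nu$ together with uniform closeness force matching endpoints for the lifts, this yields the claim.

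First I would fix continuous lifts $\tilde{\mu}, \tilde{\nu}: J \to \mathbb{R}$ normalized so that $\tilde{\mu}(a) = \tilde{\nu}(a)$; since $\mu(a) = \nu(a)$, this normalization is consistent. Because $|\mu(t) \overline{\nu(t)} - 1| = |\mu(t) - \nu(t)| \cdot |\nu(t)| < \epsilon$ (using $|\nu(t)| = 1$), the path $\mu \overline{\nu}$ is confined to the arc of $S^1$ within angular distance $2 \arcsin(\epsilon/2)$ of $1$. Uniqueness of the lift of $\mu \overline{\nu}$ from $0$ then yields $|\tilde{\mu}(t) - \tilde{\nu}(t)| < 2 \arcsin(\epsilon/2)$ for all $t \in J$. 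Taking $\epsilon$ small enough that this bound is below $\pi$, and recalling that $\mu(b) = \nu(b)$ forces $\tilde{\mu}(b) - \tilde{\nu}(b) \in 2\pi \mathbb{Z}$, I conclude $\tilde{\mu}(b) = \tilde{\nu}(b)$.

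The heart of the argument is the closed-form identity
\[
\mas(\mu) \;=\; \Big\lfloor \frac{\tilde{\mu}(b) - \pi}{2\pi} \Big\rfloor - \Big\lfloor \frac{\tilde{\mu}(a) - \pi}{2\pi} \Big\rfloor,
\]
which manifestly depends only on the endpoints of the lift. To establish it, I would take a partition $a = t_0 < \cdots < t_n = b$ with $\epsilon_j \in (0,\pi)$ as in Definition \ref{dfnDef3.6} and, refining if necessary, arrange that on each $(t_{j-1}, t_j)$ the lift $\tilde{\mu}$ meets the lattice $\pi + 2\pi \mathbb{Z}$ at most once. The subinterval contribution $k(t_j, \epsilon_j) - k(t_{j-1}, \epsilon_j) \in \{-1, 0, +1\}$ is determined by whether $\tilde{\mu}(t_{j-1})$ and $\tilde{\mu}(t_j)$ lie in the strips $\bigcup_k [\pi + 2\pi k, \pi + 2\pi k + \epsilon_j)$. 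A direct case analysis matches each such contribution to the jump of the right-continuous step function $\theta \mapsto \lfloor (\theta - \pi)/(2\pi) \rfloor$ across the subinterval, and summation gives the displayed formula. Applying it to both $\mu$ and $\nu$, with $\tilde{\mu}(a) = \tilde{\nu}(a)$ and $\tilde{\mu}(b) = \tilde{\nu}(b)$ from the previous paragraph, yields $\mas(\mu) = \mas(\nu)$.

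The main obstacle is the case analysis at endpoint crossings, when $\mu(a) = -1$ or $\mu(b) = -1$ (so that $\tilde{\mu}(a)$ or $\tilde{\mu}(b)$ lies in $\pi + 2\pi \mathbb{Z}$). The paper's convention---increment upon counterclockwise arrival, decrement upon clockwise departure, and neither in the other two cases---must be matched against the right-continuity of the floor function and the half-open nature of the arc $A_j = [\pi, \pi + \epsilon_j)$. Once these two bookkeeping systems are aligned across all four endpoint-direction combinations, the floor formula holds uniformly and the conclusion $\mas(\mu) = \mas(\nu)$ follows immediately.
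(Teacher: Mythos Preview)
Your proof is correct and takes a genuinely different route from the paper. The paper argues by case analysis on whether the endpoints $\mu(a),\mu(b)$ equal $-1$: when neither does, it chooses $\epsilon$ small enough that $-1 \notin B_\epsilon(\mu(a)) \cup B_\epsilon(\mu(b))$ and observes that both $\mu$ and $\nu$ stay inside the moving ball $B_\epsilon(\mu(t))$ for all $t$, so their spectral flows must both agree with ``the flow for $B_\epsilon(\mu(t))$''; the cases with an endpoint at $-1$ are then reduced to this one by locating a first time $t_*$ at which the ball escapes $-1$ and applying the generic argument on the remaining interval. You instead lift both paths to $\mathbb{R}$, use the closeness hypothesis to force $\tilde\mu(b)=\tilde\nu(b)$ (given $\tilde\mu(a)=\tilde\nu(a)$), and express the spectral flow of a single eigenvalue path by the closed form $\lfloor(\tilde\mu(b)-\pi)/2\pi\rfloor - \lfloor(\tilde\mu(a)-\pi)/2\pi\rfloor$, which depends only on the lift's endpoints. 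Your approach demands more up-front bookkeeping---the endpoint-crossing case analysis you flag, aligning the paper's arrival/departure conventions with the right-continuity of the floor---but it yields a reusable explicit formula and treats all cases uniformly once that alignment is done; the paper's argument is shorter and more geometric in spirit but leaves the notion of the flow of the moving ball somewhat informal.
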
    

\begin{proof} First, suppose neither $\mu (a)$ nor $\mu (b)$ 
is -1 (and so the same is true for $\nu (a)$ and $\nu (b)$). 
Take $\epsilon$ small enough so that $B_{\epsilon} (\mu (a))$
(the ball in $\mathbb{C}$ centered at $\mu (a)$ with radius
$\epsilon$) does not contain -1, and similarly for $\mu (b)$.
According to our hypothesis, we will have 
$\mu(t), \nu (t) \in B_{\epsilon} (\mu(t))$ for all $t \in J$,
and so the spectral flows for $\mu (t)$ and $\nu (t)$ will both 
match the flow for $B_{\epsilon} (\mu (t))$. 

Suppose next that $\mu (a) = -1$, but $\mu (b)$ does not. In 
this case, there must be a first time, $t_*$, at which 
$B_{\epsilon} (\mu (t_*))$ does not contain -1. By assumption, 
we must have $\nu (t_*) \in B_{\epsilon} (\mu (t_*))$, 
and this allows us to apply an argument on $[t_*, b]$ similar
to our argument on $[a, b]$ in the previous paragraph. 
A similar argument holds if $\mu (b) = -1$, but $\mu (a)$ 
does not.

Last, suppose $\mu (a) = -1$ and $\mu (b) = -1$. If $\mu(t)$ 
and $\nu (t)$ are both -1 for all $t \in J$ then we're fininshed.
If not, i.e., if there exists a time $t_*$ at which one or both 
$\mu (t_*)$ and $\nu (t_*)$ is not $-1$, then we can apply one
of the first two cases to complete the proof.
\end{proof}

Since $I \times J$ is closed and bounded, the matrices 
$\tilde{W} (s, t)$ are uniformly continuous on $I \times J$. This 
means that given any $\tilde{\epsilon} > 0$ we can find $\delta > 0$
sufficiently small so that 
\begin{equation*}
|s_1 - s_2| < \delta \implies 
\max_{t \in J} \|\tilde{W}_1 (t) - \tilde{W}_2 (t)\| < \tilde{\epsilon}. 
\end{equation*}
Fix any $k \in \{1, 2, \dots, n\}$, and set $\mu^k_1 (t) = \mu^k (s_1, t)$
and $\mu^k_2 (t) = \mu^k (s_2, t)$. By eigenvalue continuity, this means
we can take $\delta$ small enough to ensure that 
\begin{equation*}
\max_{t \in J} |\mu^k_1 (t) - \mu^k_2 (t)| < \epsilon
\end{equation*}  
for all $k \in \{1, 2, \dots, n\}$. But since $\epsilon$ is arbitrary, we
see from our claim that the flow associated with each of these eigenvalue
pairs must be the same, and so the spectral flow for $\tilde{W}_1 (t)$
must agree with that of $\tilde{W}_2 (t)$. 

Finally, then, by starting with $s_1 = 0$, and proceeding to $s_2 = \frac{\delta}{2}$,
$s_3 = \delta$ etc., we see that the Maslov index will be the same at each 
step, and that since the steps have fixed length we eventually arrive at 
$s = 1$. This concludes the proof of property (P2).

\bigskip
{\it Acknowledgements.} Y. Latushkin was supported by NSF grant DMS-1067929, 
by the Research Board and Research Council of the University of Missouri, 
and by the Simons Foundation.


\begin{thebibliography}{XXXX}


\bibitem{A01} A. Abbondandolo,
{\em Morse Theory for Hamiltonian Systems.}
Chapman \& Hall/CRC Res. Notes Math. {\bf 425}, Chapman \& Hall/CRC, Boca Raton, FL, 2001.


\bibitem{arnold67}  V. I. Arnold,
{\em Characteristic class entering in quantization conditions,}
Func. Anal. Appl. {\bf 1} (1967) 1 -- 14.

\bibitem{Arn85} V. I.  Arnold,
{\em The Sturm theorems and symplectic geometry,}
 Func. Anal. Appl. {\bf 19} (1985) 1--10.

\bibitem{At} F. V. Atkinson, {\em Discrete and Continuous Boundary Problems},
in the series {\em Mathematics in Science and Engineering} (vol. 8), 
Academic Press 1964.

\bibitem{B56} R.\ Bott,
{\em On the iteration of closed geodesics and the Sturm intersection theory,}
Comm. Pure Appl. Math. {\bf 9} (1956) 171 -- 206.

\bibitem{BF98} B.\ Booss-Bavnbek and K.\ Furutani, {\em The Maslov index: a functional analytical definition and the spectral flow formula,}
Tokyo J.\ Math.\ {\bf 21} (1998), 1--34.

\bibitem{BJ1995} A.\ Bose and C.\ K.\ R.\ T.\ Jones,
{\em Stability of the in-phase traveling wave solution in a pair 
of coupled nerve fibers,}
Indiana U. Math. J. {\bf 44} (1995) 189 -- 220.

\bibitem{BK} G.\ Berkolaiko and P.\ Kuchment, 
{\em Introduction to quantum graphs}, Mathematical 
Surveys and Monographs {\bf 186}, AMS 2013.

\bibitem{BM2013} M.\ Beck and S.\ Malham, 
{\em Computing the Maslov index for large systems}, 
Proceedings of the AMS {\bf 143} (2015) 2159 -- 2173.

\bibitem{BeOr78} C. \ Bender and S. \ Orszag, {\em Advanced  Mathematical Methods for Scientists and Engineers.} McGraw-Hill, Sydney, 1978.

\bibitem{CDB06} F.\ Chardard, F.\ Dias and T.\ J.\ Bridges, {\em Fast computation of the Maslov index for hyperbolic linear systems with periodic coefficients.} J. Phys. A {\bf 39} (2006) 14545 -- 14557.

\bibitem{CDB09} F.\ Chardard, F.\ Dias and T.\ J.\ Bridges, {\em
 Computing the Maslov index of solitary waves. I. Hamiltonian systems on a four-dimensional phase space},
Phys. D {\bf 238} (2009) 1841 -- 1867.

\bibitem{CDB11} F.\ Chardard, F.\ Dias and T.\ J.\ Bridges, {\em Computing the Maslov index of solitary waves, Part 2: Phase space with dimension greater than four.} Phys. D {\bf 240} (2011) 1334 -- 1344.

\bibitem{Chardard2009} F.\ Chardard, 
{\em Stability of Solitary Waves}, Doctoral thesis, Centre de Mathematiques et de 
Leurs Applications, 2009. Advisor: T.\ J.\ Bridges. 

\bibitem{CJLS2014} G.\ Cox, C.\ K.\ R.\ T.\ Jones, Y.\ Latushkiun, and A.\ Sukhtayev, 
{\em The Morse and Maslov indices for multidimensional Schr\"odinger operators 
with matrix-valued potentials}, to appear in Transactions of the American Mathematical
Society.

\bibitem{CZ84} C. Conley and E. Zehnder, {\em Morse-type index theory for flows and periodic solutions for Hamiltonian equations.} Comm. Pure Appl. Math. {\bf 37} (1984) 207 -- 253.

\bibitem{CLM}  S.\ Cappell, R.\ Lee and E.\ Miller, {\em On the Maslov index},
Comm.\ Pure Appl.\ Math.\ {\bf 47}  (1994), 121--186.

\bibitem{D76} J. J. Duistermaat, {\em On the Morse index in variational calculus.} Advances in Math. {\bf 21} (1976) 173 -- 195.

\bibitem{DJ11} J.\ Deng and C.\ Jones, {\em Multi-dimensional Morse Index Theorems and a symplectic view of elliptic boundary value problems,}
Trans. Amer. Math. Soc. {\bf 363} (2011) 1487 -- 1508.

\bibitem{DS} N.\ Dunford and J.\ T.\ Schwartz, {\em Linear Operators Part II: Spectral Theory}, John
Wiley \& Sons, Inc., 1988 reprint of 1963 edition. 

\bibitem{DZ} H. I. Dwyer and A. Zettl, {\em Eigenvalue computations for 
regular matrix Sturm-Liouville problems}, Electronic J. Differential
Equations {\bf 1995} (1995) 1 -- 13.

\bibitem{FJN03} R.\ Fabbri, R.\ Johnson and C.\ N\'u\~nez,
{\em Rotation number for non-autonomous linear Hamiltonian
systems I: Basic properties,}
Z. angew. Math. Phys. {\bf 54} (2003) 484 -- 502.

\bibitem{F} K.\ Furutani, {\em Fredholm-Lagrangian-Grassmannian and the Maslov index,} Journal of Geometry and Physics {\bf 51} (2004) 269 -- 331.

\bibitem{Ga93} R.\ A.\ Gardner,
{\em On the structure of the spectra of periodic travelling waves,}
 J. Math. Pures Appl. {\bf 72} (1993) 415 -- 439.

 \bibitem{G07} F.\ Gesztesy,
{\it Inverse spectral theory as influenced by Barry Simon}, In: {\em Spectral Theory and Mathematical Physics: a Festschrift in Honor of Barry Simon's 60th Birthday}, pp.\ 741 -- 820,
Proc. Sympos. Pure Math. {\bf 76}, Part 2, AMS, Providence, RI, 2007.

\bi {GLZ} F.\,Gesztesy, Y.\,Latushkin and K.\ Zumbrun,
{\it Derivatives of (modified) Fredholm determinants and stability of standing and traveling waves}, 
J. Math. Pures Appl. {\bf 90} (2008), 160--200.

\bibitem{GM2008} F. Gesztesy and M. Mitrea, {\it Generalized Robin boundary conditions, Robin-to-Dirichlet maps, and Krein-type
resolvent formulas for Schr\"odinger operators on bounded Lipschuitz domains},
in {\it Perspectives in Partial Differential Equations, Harmonic Analysis and Applications}, D. Mitrea and M. Mitrea
(eds.), Proceedings of Symposia in Pure Mathematics, American Mathematical Society, RI 2008.

\bibitem{GST96} F.\ Gesztesy, B.\ Simon and G.\ Teschl,
{\em Zeros of the Wronskian and renormalized oscillation theory}, Amer. J. Math. {\bf 118} (1996)  571 -- 594.

 \bibitem{GT09} F. Gesztesy and V. Tkachenko, {\em  A criterion for Hill operators to be spectral operators of scalar type.} J. Anal. Math. {\bf 107} (2009) 287 -- 353.

\bibitem{GW96} F. Gesztesy and R. Weikard, {\em Picard potentials and Hill's equation on a torus.} Acta Math. {\bf 176} (1996) 73 -- 107.



\bibitem{HS} P. Howard and A. Sukhtayev, 
{\em The Maslov and Morse indices for Schr\"odinger operators on $[0,1]$}, 
J. Differential equations {\bf 260} (2016) 4499 -- 4549.

\bibitem{HLS} P. Howard, Y. Latushkin, and A. Sukhtayev,
{\em The Maslov and Morse indices for Schr\"odinger operators on $\mathbb{R}$},
Preprint 2016.

\bibitem{J88}  C.\ K.\ R.\ T.\ Jones,
{\em Instability of standing waves for nonlinear Schr\"odinger-type equations},
Ergodic Theory Dynam. Systems {\bf 8} (1988) 119 -- 138.

\bibitem{J88a}  C.\ K.\ R.\ T.\ Jones,
{\em An instability mechanism for radially symmetric standing
waves of a nonlinear Schr\"odinger equation},
J. Differential Equations {\bf 71} (1988) 34 -- 62.

\bibitem{JLM} C.\ K.\ R.\ T.\ Jones, Y.\ Latushkin and R.\ Marangell,
{\em The Morse and Maslov indices for matrix Hill's equations}, preprint.

\bibitem{JM2012} C.\ K.\ R.\ T.\ Jones and R.\ Marangell,
{\em The spectrum of travelling wave solutions to the Sine-Gordon
equation}, Discrete and Cont. Dyn. Sys. {\bf 5} (2012) 925 -- 937.

\bibitem{JS99} D.\ W.\ Jordan and P.\ Smith, {\it Nonlinear Ordinary Differential Equations: An Introduction to Dynamical Systems.} Oxford App. and Engin. Math., Oxford, 1999.

\bibitem{K97} Y. Karpeshina,
{\em Perturbation Theory for the Schr\"odinger Operator with a Periodic Potential.}
Lect. Notes Math. {\bf 1663},  Springer-Verlag, Berlin, 1997.

\bibitem{Kato} T.\ Kato, {\it Perturbation Theory for Linear Operators}, Springer, Berlin, 1980.

\bibitem{K} A.\ Krall, {\it Hilbert Space, Boundary Value Problems and Orthogonal Polynomials.
Operator Theory: Advances and Applications}, {\bf 133}, Birkhauser Verlag, Basel, 2002.

\bibitem{Keener} J. P. Keener, {\it Principles of Applied Mathematics: Transformation and 
Approximation}, 2nd Ed., Westview 2000. 


\bibitem{Kuchment2004} P. Kuchment, {\it Quantum graphs: I. Some basic structures}, Waves in random
media {\bf 14}.

\bibitem{LS1} Y.\ Latushkin and A.\ Sukhtayev, {\it The Evans function and the Weyl-Titchmarsh function}, in
{\it Special issue on stability of travelling waves},  Disc. Cont. Dynam. Syst. Ser. S 5 (2012), no. 5, 939 - 970.


\bibitem{MW} W.\ Magnus and S.\ Winkler, {\em Hill's Equation},  Dover, New York, 1979.

\bi{Maslov1965a} V.\ P.\ Maslov, {\it Theory of perturbations and 
asymptotic methods}, Izdat. Moskov. Gos. Univ. Moscow, 1965. French 
tranlation Dunod, Paris, 1972. 


\bibitem{M63} J.\ Milnor, {\em Morse Theory},  Annals of Math.\ Stud. {\bf 51}, Princeton Univ. Press, Princeton, N.J., 1963.

\bibitem{O90} V.\ Yu.\ Ovsienko,  {\em Selfadjoint differential operators and curves on a Lagrangian Grassmannian that are subordinate to a loop,}  Math. Notes {\bf 47} (1990)  270 -- 275.

\bibitem{P96} J.\ Phillips, Selfadjoint Fredholm operators and spectral flow, {\em Canad.\ Math.\ Bull.} {\bf 39} (1996), 460--467.

\bibitem{RS78} M.\ Reed and B.\ Simon, {\it Methods of Modern Mathematical
Physics. IV: Analysis of Operators},  Academic Press, New York, 1978.
	
\bibitem{rs93} J.\ Robbin and D.\ Salamon,
{\em The Maslov index for paths}, Topology {\bf 32} (1993) 827 -- 844.

\bibitem{RoSa95} J.\ Robbin and D.\ Salamon, {\it  The spectral flow and the Maslov
index},  Bull. London Math. Soc. {\bf 27} (1995)  1--33.

\bibitem{SS08} B.\ Sandstede and A.\ Scheel, \emph{Relative Morse indices, Fredholm indices, and group velocities},  Discrete Contin. Dyn. Syst.  \textbf{20}  (2008)  139 -- 158.

\bi{W05} J. Weidman, {\em Spectral theory of Sturm-Liouville operators. Approximation by regular problems.} In:
{\em Sturm-Liouville Theory: Past and Present,} pp. 75--98,
W.\ O.\ Amrein, A.\ M.\ Hinz and D.\ B.\ Pearson, edts, Birkh\"auser, 2005.

\end{thebibliography}
\end{document}